\theoremstyle{plain}
\newtheorem{theorem}{Theorem}
\newtheorem{proposition}[theorem]{Proposition}
\newtheorem{lemma}[theorem]{Lemma}
\newtheorem{corollary}[theorem]{Corollary}
\newtheorem{definition}[theorem]{Definition}
\theoremstyle{definition}
\newtheorem{remark}[theorem]{Remark}
\newtheorem{example}[theorem]{Example}
\numberwithin{equation}{section}
\numberwithin{theorem}{section}
\def\N{{\mathbb N}}
\def\R{{\mathbb R}}
\newcommand{\E}{{\mathbb E}}
\newcommand{\F}{{\mathcal F}}
\newcommand{\eps}{\varepsilon}
\newcommand{\wt}{\widetilde}
\renewcommand{\O}{\Omega}
\newcommand{\one}{{{\bf 1}}}
\newcommand{\MQ}{\textnormal{MQ}}
\newcommand{\ud}[0]{\,\mathrm{d}}
\newcommand{\vertiii}[1]{{\left\vert\kern-0.25ex\left\vert\kern-0.25ex\left\vert #1
    \right\vert\kern-0.25ex\right\vert\kern-0.25ex\right\vert}}
\begin{document}

\title[Pointwise properties of martingales in Banach function spaces]
{Pointwise properties of martingales \\ with values in Banach function spaces}

\author{Mark Veraar}
\address{Delft Institute of Applied Mathematics\\
Delft University of Technology \\ P.O. Box 5031\\ 2600 GA Delft\\The
Netherlands}
\email{M.C.Veraar@tudelft.nl}

\author{Ivan Yaroslavtsev}
\email{I.S.Yaroslavtsev@tudelft.nl}

\begin{abstract}
In this paper we consider local martingales with values in a UMD Banach function space. We prove that such martingales have a version which is a martingale field. Moreover, a new Burkholder--Davis--Gundy type inequality is obtained.
\end{abstract}

\thanks{The first named author is supported by the Vidi subsidy 639.032.427 of the Netherlands Organisation for Scientific Research (NWO)}

\keywords{local martingale, quadratic variation, UMD Banach function spaces, Burkholder-Davis-Gundy inequalities, lattice maximal function}

\subjclass[2010]{Primary: 60G44; Secondary: 60B11, 60H05, 60G48}

\maketitle

\section{Introduction}

The discrete Burkholder--Davis--Gundy inequality (see \cite[Theorem 3.2]{Burk73}) states that for any $p\in (1, \infty)$ and martingales difference sequence $(d_j)_{j=1}^n$ in $L^p(\Omega)$ one has
\begin{equation}\label{eq:BurkholderR}
\Big\|\sum_{j=1}^n d_j\Big\|_{L^p(\Omega)} \eqsim_p \Big\|\Big(\sum_{j=1}^n |d_j|^2\Big)^{1/2}\Big\|_{L^p(\Omega)}.
\end{equation}
Moreover, there is the extension to continuous-time local martingales $M$ (see \cite[Theorem 26.12]{Kal}) which states that for every $p\in [1, \infty)$,
\begin{equation}\label{eq:Mcont}
\big\|\sup_{t\in [0,\infty)}|M_t|\big\|_{L^p(\Omega)} \eqsim_{p} \big\|[M]_{\infty}^{1/2}\big\|_{L^p(\Omega)}.
\end{equation}
Here $t\mapsto [M]_t$ denotes the quadratic variation process of $M$.

In the case $X$ is a UMD Banach function space the following variant of \eqref{eq:BurkholderR} holds (see \cite[Theorem 3]{Rubio86}): for any $p\in (1, \infty)$ and martingales difference sequence $(d_j)_{j=1}^n$ in $L^p(\Omega;X)$ one has
\begin{equation}\label{eq:BurkholderX}
\Big\|\sum_{j=1}^n d_j\Big\|_{L^p(\Omega;X)} \eqsim_p \Big\|\Big(\sum_{j=1}^n |d_j|^2\Big)^{1/2}\Big\|_{L^p(\Omega;X)}.
\end{equation}
Moreover, the validity of the estimate also characterizes the UMD property.

It is a natural question whether \eqref{eq:Mcont} has a vector-valued analogue as well.
The main result of this paper states that this is indeed the case:

\begin{theorem}\label{thm:mainintro}
Let $X$ be a UMD Banach function space over a $\sigma$-finite measure space $(S, \Sigma, \mu)$. Assume that $N:\R_+\times \O\times S\to \R$ is such that $N|_{[0,t] \times \Omega \times S}$ is $\mathcal B([0,t])\otimes \mathcal F_t\otimes \Sigma$-measurable for all $t\geq 0$ and such that for almost all $s\in S$, $N(\cdot, \cdot,s)$ is a martingale with respect to $(\F_t)_{t\geq 0}$ and $N(0,\cdot,s) = 0$. Then for all $p\in (1,\infty)$,
\begin{equation}\label{eq:BDG}
\big\|\sup_{t\geq 0} |N(t,\cdot,\cdot)| \big\|_{L^p(\Omega;X)}  \eqsim_{p,X} \sup_{t\geq 0}\big\|N(t,\cdot,\cdot) \big\|_{L^p(\Omega;X)}  \eqsim_{p,X} \|[N]_{\infty}^{1/2}\|_{L^p(\Omega;X)}.
\end{equation}
where $[N]$ denotes the quadratic variation process of $N$.
\end{theorem}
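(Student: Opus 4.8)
The plan is to deduce the continuous-time, $X$-valued Burkholder--Davis--Gundy inequality \eqref{eq:BDG} from the discrete vector-valued estimate \eqref{eq:BurkholderX} together with a Fubini/measurability argument that makes sense of the quadratic variation $[N]$ as an element of $X$. First I would fix $p\in(1,\infty)$ and treat the three quantities separately. The middle term $\sup_t\|N(t,\cdot,\cdot)\|_{L^p(\Omega;X)}$ is dominated by the first by the trivial monotonicity $|N(t)|\le \sup_t|N(t)|$ pointwise on $\Omega\times S$; and it is comparable from below by Doob's maximal inequality applied scalar-wise in each coordinate $s\in S$ — but here one must be careful, since Doob in $L^p(\Omega;X)$ is not immediate. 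The cleaner route is: (i) prove the equivalence of the first and third terms directly, and (ii) get the middle term for free since it is squeezed between them (it is $\le$ the first, and $\gtrsim$ the third by the same lower bound one proves for the first).

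The heart of the matter is the equivalence $\|\sup_t|N(t)|\|_{L^p(\Omega;X)}\eqsim_{p,X}\|[N]_\infty^{1/2}\|_{L^p(\Omega;X)}$. I would first establish it for a \emph{discrete} martingale, i.e.\ when $N$ is constant on the intervals of a deterministic partition $0=t_0<t_1<\dots<t_n$. Writing $d_j = N(t_j)-N(t_{j-1})$, the sum $\sum_{j=1}^k d_j = N(t_k)$ is a martingale difference sequence in $L^p(\Omega;X)$, so \eqref{eq:BurkholderX} gives $\|N(t_n)\|_{L^p(\Omega;X)}\eqsim_p\|(\sum_j|d_j|^2)^{1/2}\|_{L^p(\Omega;X)}$. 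Now $\sum_j|d_j|^2 = [N]_\infty$ in the $X$-lattice sense for this discrete $N$, and to pass from $\|N(t_n)\|$ to $\|\sup_k|N(t_k)|\|$ on the left one uses the UMD (hence reflexive, hence Doob-type) property: in a UMD Banach function space the lattice-valued Doob maximal inequality $\|\sup_k|\sum_{j\le k} d_j|\|_{L^p(\Omega;X)}\lesssim_{p,X}\|\sum_j d_j\|_{L^p(\Omega;X)}$ holds — this should be quotable from the literature on the \emph{lattice maximal function} (indeed it is listed among the paper's keywords), or derivable by applying the scalar Doob inequality inside the integral over $S$ after a stopping-time/Fefferman--Stein argument. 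Combining these gives the discrete case with constants depending only on $p$ and $X$.

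To go from discrete to continuous, I would approximate: for a dyadic partition $\pi_n$ of $[0,T]$ set $N^n(t) = N(t_j,\cdot,\cdot)$ for $t\in[t_{j-1},t_j)$, which is again a martingale (sampled at the partition times) satisfying the hypotheses, apply the discrete result to $N^n$, and then let the mesh tend to $0$ and $T\to\infty$. On the left, $\sup_t|N^n(t)|\uparrow\sup_t|N(t)|$ pointwise (using right-continuity of paths, which one may assume after passing to the standard modification) and one invokes monotone convergence in $L^p(\Omega;X)$. On the right, $[N^n]_\infty = \sum_j |N(t_j)-N(t_{j-1})|^2 \to [N]_\infty$ in probability for a.e.\ $s$ by the scalar theory of quadratic variation, and with a uniform $L^p$ bound (from the discrete estimate) one upgrades this to convergence of $\|[N^n]_\infty^{1/2}\|_{L^p(\Omega;X)}\to\|[N]_\infty^{1/2}\|_{L^p(\Omega;X)}$, using that $x\mapsto x^{1/2}$ is a lattice operation and uniform integrability. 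A localization argument handles the general \emph{local} martingale case by stopping at $\tau_m=\inf\{t:\|N(t)\|_X>m\}$ or a suitable sequence, then sending $m\to\infty$ via monotone convergence on both sides.

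The main obstacle I anticipate is the measurability and well-definedness of $[N]_\infty$ as a strongly measurable $X$-valued function: for each fixed $s$ one has a scalar quadratic variation $[N(\cdot,\cdot,s)]_\infty$, but one must show these paste together into a jointly measurable object lying in $X$ a.s., and that the discrete approximants converge to it in the right topology — this is exactly where the joint measurability hypothesis on $N|_{[0,t]\times\Omega\times S}$ and the $\sigma$-finiteness of $\mu$ are used, likely via a Fubini theorem for the Itô integral / quadratic variation in the spirit of the stochastic Fubini theorem. A secondary technical point is justifying the lattice-valued Doob inequality with the correct ($p,X$)-dependence of constants; if a clean reference is unavailable, one proves it by interpolation or by the Rubio de Francia extrapolation machinery underlying \eqref{eq:BurkholderX}.
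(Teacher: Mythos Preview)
Your overall strategy---lattice Doob maximal inequality for the first equivalence, then the discrete square-function estimate \eqref{eq:BurkholderX} plus a limiting argument for the second---is exactly the paper's. The discrete case and the use of the lattice maximal function are handled just as you describe. The gap is in your passage from discrete to continuous time on the quadratic-variation side.

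You assert that $[N^n]_\infty=\sum_j|N(t_j)-N(t_{j-1})|^2\to[N]_\infty$ in probability for a.e.\ $s$, and then that a uniform $L^p(\Omega;X)$ bound plus uniform integrability upgrades this to convergence of $\|[N^n]_\infty^{1/2}\|_{L^p(\Omega;X)}$. But pointwise-in-$s$ convergence in probability on $\Omega$ does \emph{not} give convergence of the $X$-valued random variable $[N^n]_\infty^{1/2}$ in $L^0(\Omega;X)$, let alone in $L^p(\Omega;X)$: the null sets and the subsequences along which one has a.s.\ convergence depend on $s$, and there is no dominating function independent of $n$ to invoke. Uniform integrability in $L^p(\Omega)$ of $\|[N^n]_\infty^{1/2}\|_X$ is useless without convergence in probability of that same scalar quantity, which you have not established. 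This is a genuine obstruction in infinite dimensions; you correctly flag the issue in your last paragraph but underestimate it.

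The paper avoids this entirely. Instead of refining the time partition for a fixed infinite-dimensional $M$, it first approximates $M(T)$ in $L^p(\Omega;X)$ by simple functions, producing martingales $M_n$ with values in \emph{finite}-dimensional subspaces of $X$. For each $M_n$ the discrete-to-continuous passage works because Dol\'eans' lemma gives $L^{p/2}(\Omega)$-convergence of the Riemann sums to $[M_n]_T$ coordinatewise, and finitely many coordinates reassemble into $L^p(\Omega;X)$-convergence by equivalence of norms. To transfer the resulting estimate back to $M$, the paper proves a completeness result (Proposition~\ref{prop:completeMQ}): the space $\MQ^p(X)$ of martingale fields with finite $\|[N]_\infty^{1/2}\|_{L^p(\Omega;X)}$ is a Banach space, and $N_n\to N$ there implies a subsequence converges in $L^1(\Omega;\mathcal D_b)$ pointwise in $s$. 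This is the missing ingredient in your argument---replace your direct partition-refinement step with finite-dimensional approximation plus this completeness, and the proof goes through.
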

By standard methods we can extend Theorem \ref{thm:mainintro} to spaces $X$ which are isomorphic to a closed subspace of a Banach function space (e.g.\ Sobolev and Besov spaces, etc.)

The two-sided estimate \eqref{eq:BDG} can for instance be used to obtain two-sided estimates for stochastic integrals for processes with values in infinite dimensions (see \cite{NVW} and \cite{VerYar}). In particular, applying it with $N(t,\cdot,s) = \int_0^t \Phi(\cdot, s) \ud W$ implies the following maximal estimate for the stochastic integral
\begin{align}
\notag \Big\|s\mapsto \sup_{t\geq 0} \Big|\int_0^{t}\Phi(\cdot, s) \ud W\Big|  \Big\|_{L^p(\Omega; X)} & \eqsim_{p,X}
\label{eq:squarefuncW} \sup_{t\geq 0} \Big\|s\mapsto \int_0^{t}\Phi(\cdot, s) \ud W\Big\|_{L^p(\Omega; X)}
\\ &  \eqsim_{p,X} \Big\|s\mapsto \Big(\int_0^{\infty}\Phi^2(t,s)\ud t\Big)^{1/2}\Big\|_{L^p(\Omega;X)},
\end{align}
where $W$ is a Brownian motion and $\Phi:\R_+\times \Omega\times S\to \R$ is a progressively measurable process such that the right-hand side of \eqref{eq:squarefuncW} is finite. The second norm equivalence was obtained in \cite{NVW}. The norm equivalence with the left-hand side is new in this generality. The case where $X$ is an $L^q$-space was recently obtained in \cite{antoniregular} using different methods.

It is worth noticing that the second equivalence of \eqref{eq:BDG} in the case of $X=L^q$ was obtained by Marinelli in \cite{marinelli2013maximal} for some range of $1<p,q<\infty$ by using an interpolation method.

The UMD property is necessary in Theorem \ref{thm:mainintro} by necessity of the UMD property in \eqref{eq:BurkholderX} and the fact that any discrete martingale can be transformed to a continuous-time one. Also in the case of continuous martingales, the UMD property is necessary in Theorem \ref{thm:mainintro}. Indeed, applying \eqref{eq:squarefuncW} with $W$ replaced by an independent Brownian motion $\wt{W}$ we obtain
\[
 \Big\|\int_0^{\infty}\Phi \ud W\Big\|_{L^p(\Omega; X)}\eqsim_{p,X} \Big\|\int_0^{\infty}\Phi \ud \widetilde W\Big\|_{L^p(\Omega; X)},
\]
for all predictable step processes $\Phi$. The latter holds implies that $X$ is a UMD Banach space (see \cite[Theorem 1]{Ga1}).

In the special case that $X = \R$ the above reduces to \eqref{eq:Mcont}.  In the proof of Theorem \ref{thm:mainintro} the UMD property is applied several times:
\begin{itemize}
\item The boundedness of the lattice maximal function (see \cite{Bour:BCP,GCMT93,Rubio86}).
\item The $X$-valued Meyer--Yoeurp decomposition of a martingale (see Lemma~\ref{lem:XUMDdecom}).
\item The square-function estimate \eqref{eq:BurkholderX} (see \cite{Rubio86}).
\end{itemize}

It remains open whether there exists a predictable expression for the right-hand side of \eqref{eq:BDG}. One would expect that one needs simply to replace $[N]$ by its predictable compensator, the {\em predictable quadratic variation} $\langle N\rangle$. Unfortunately, this does not hold true already in the scalar-valued case: if $M$ is a real-valued martingale, then
\[
\mathbb E |M|^p_t \lesssim_{p} \mathbb E \langle M\rangle^{\frac p2}_t,\;\;\; t\geq 0,\;\; p<2,
\]
\[
\mathbb E |M|^p_t \gtrsim_{p} \mathbb E \langle M\rangle^{\frac p2}_t,\;\;\; t\geq 0,\;\; p>2,
\]
where both inequalities are known not to be sharp (see \cite[p.\ 40]{Burk73}, \cite[p.\ 297]{MarRo}, and \cite{Os12}). The question of finding such a predictable right-hand side in \eqref{eq:BDG} was answered only in the case $X=L^q$ for $1<q<\infty$ by Dirsken and the second author (see \cite{DY17}). The key tool exploited there was the so-called {\em Burkholder-Rosenthal inequalities}, which are of the following form:
\begin{equation*}
 \mathbb E \|M_N\|^p \eqsim_{p,X} \vertiii{ (M_n)_{0\leq n \leq N} }_{p,X}^p,
\end{equation*}
where $(M_n)_{0\leq n \leq N}$ is an $X$-valued martingale, $\vertiii{\cdot}_{p,X}$ is a certain norm defined on the space of $X$-valued $L^p$-martingales which depends only on {\em predictable moments} of the corresponding martingale. Therefore using approach of \cite{DY17} one can reduce the problem of continuous-time martingales to discrete-time martingales. However, the Burkholder-Rosenthal inequalities are explored only in the case $X=L^q$.

Thanks to \eqref{eq:Mcont} the following natural question arises: can one generalize \eqref{eq:BDG} to the case $p=1$, i.e.\ whether
\begin{equation}\label{eq:p=1}
  \big\|\sup_{t\geq 0} |N(t,\cdot,\cdot)| \big\|_{L^1(\Omega;X)}  \eqsim_{p,X} \|[N]_{\infty}^{1/2}\|_{L^1(\Omega;X)}
\end{equation}
holds true? Unfortunately the outlined earlier techniques cannot be applied in the case $p=1$. Moreover, the obtained estimates cannot be simply extrapolated to the case $p=1$ since those contain the {\em UMD$_p$ constant}, which is known to have infinite limit as $p\to 1$. Therefore \eqref{eq:p=1} remains an open problem. Note that in the case of a continuous martingale $M$ inequalities \eqref{eq:BDG} can be extended to the case $p\in (0,1]$ due to the classical Lenglart approach (see Corollary \ref{cor:cont0<pleq1}).

\subsubsection*{Acknowledgment} The authors would like to thank the referee for helpful comments.

\section{Preliminaries}

Throughout the paper any filtration satisfies the {\em usual conditions} (see \cite[Definition 1.1.2 and 1.1.3]{JS}), unless the underlying martingale is continuous (then the corresponding filtration can be assumed general).

\smallskip

A Banach space $X$ is called a {\it UMD space} if for some (or equivalently, for all) $p \in (1,\infty)$ there exists a constant $\beta>0$ such that
for every $n \geq 1$, every martingale difference sequence $(d_j)^n_{j=1}$ in $L^p(\Omega; X)$, and every $\{-1, 1\}$-valued sequence
$(\varepsilon_j)^n_{j=1}$
we have
\[
\Bigl(\E \Bigl\| \sum^n_{j=1} \varepsilon_j d_j\Bigr\|^p\Bigr )^{\frac1p}
\leq \beta \Bigl(\E \Bigl \| \sum^n_{j=1}d_j\Bigr\|^p\Bigr )^{\frac1p}.
\]
The above class of spaces was extensively studied by Burkholder
(see \cite{Bu1}). UMD spaces are always reflexive. Examples of UMD
space include the reflexive range of $L^q$-spaces, Besov spaces,
Sobolev, and Musielak-Orlicz spaces. Example of spaces without the
UMD property include all nonreflexive spaces, e.g. $L^1(0,1)$ and
$C([0,1])$. For details on UMD Banach spaces we refer the reader
to \cite{Burk01,HyNeVeWe16,Osekobook,Rubio86}.

The following lemma follows from \cite[Theorem 3.1]{Yarodecom}.
\begin{lemma}[Meyer-Yoeurp decomposition]\label{lem:XUMDdecom}
Let $X$ be a UMD space and $p\in (1, \infty)$. Let $M:\mathbb R_+ \times \Omega \to X$ be an $L^p$-martingale that takes values in some closed subspace $X_0$ of $X$.
Then there exists a unique decomposition $M = M^d + M^c$, where $M^c$ is continuous, $M^d$ is purely discontinuous and starts at zero, and $M^d$ and $M^c$ are $L^p$-martingales with values in $X_0\subseteq X$. Moreover, the following norm estimates hold for every $t\in [0,\infty)$,
\begin{equation}\label{eq:MYdecLpineq}
 \begin{split}
  \|M^d(t)\|_{L^p(\Omega;X)} \leq \beta_{p,X} \|M(t)\|_{L^p(\Omega;X)},\\
\|M^c(t)\|_{L^p(\Omega;X)} \leq \beta_{p,X} \|M(t)\|_{L^p(\Omega;X)}.
 \end{split}
\end{equation}
Furthermore, if $A^{p, d}_X$ and $A^{p,c}_X$ are the corresponding linear operators that map $M$ to $M^d$ and $M^c$ respectively, then
\[
 A^{p, d}_X = A^{p, d}_{\mathbb R} \otimes \textnormal{Id}_X,
\]
\[
 A^{c, d}_X = A^{c, d}_{\mathbb R}\otimes \textnormal{Id}_X.
\]
\end{lemma}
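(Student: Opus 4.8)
The plan is to deduce the lemma from \cite[Theorem 3.1]{Yarodecom}, which already gives, for any UMD space $Y$ and any $p\in(1,\infty)$, the existence, uniqueness and $L^p$-boundedness of the Meyer--Yoeurp decomposition of a $Y$-valued $L^p$-martingale into a continuous part and a purely discontinuous part vanishing at zero, with the relevant constant controlled by (and here written as) $\beta_{p,Y}$. The first step is therefore simply to invoke this with $Y=X$: it yields the decomposition $M=M^c+M^d$, its uniqueness, and the two estimates in \eqref{eq:MYdecLpineq}. It only remains to justify that the decomposition respects the closed subspace $X_0$ and that the associated projections have the tensor form stated.

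For the subspace claim I would use that a closed subspace $X_0$ of a UMD space $X$ is again UMD, with $\beta_{p,X_0}\le\beta_{p,X}$. Hence \cite[Theorem 3.1]{Yarodecom} applies to $M$ regarded as an $X_0$-valued $L^p$-martingale and produces a decomposition $M=M^c_0+M^d_0$ in which both parts take values in $X_0$. Such a decomposition is in particular a decomposition of the required type for the $X$-valued martingale $M$, so by the uniqueness already established it coincides with $M=M^c+M^d$. Thus $M^c,M^d$ take values in $X_0$, and the estimates \eqref{eq:MYdecLpineq} with constant $\beta_{p,X}$ follow from the $X_0$-version together with $\beta_{p,X_0}\le\beta_{p,X}$.

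For the identities $A^{p,d}_X=A^{p,d}_{\mathbb R}\otimes\mathrm{Id}_X$ and $A^{p,c}_X=A^{p,c}_{\mathbb R}\otimes\mathrm{Id}_X$, I would first reduce to a finite horizon $[0,T]$, on which the space of $X$-valued $L^p$-martingales is identified, via the terminal value and contractivity of conditional expectations (plus Doob for the maximal function), with $L^p(\Omega,\mathcal F_T;X)$, in which $L^p(\Omega,\mathcal F_T)\otimes X$ is dense; one checks along the way that the decomposition on $[0,\infty)$ restricts to the one on each $[0,T]$, again by uniqueness. On an elementary martingale $M(t)=\sum_{k=1}^n m_k(t)x_k$ with $m_k$ real $L^p$-martingales and $x_k\in X$, I would verify directly that $\sum_k m_k^{c}(t)x_k$ is continuous and that $\sum_k m_k^{d}(t)x_k$ is a purely discontinuous martingale starting at zero; for the latter I would invoke the standard characterisation that a c\`adl\`ag Banach-space-valued martingale $L$ is purely discontinuous iff $\langle x^{*},L\rangle$ is real purely discontinuous for every $x^{*}\in X^{*}$, which here holds since $\langle x^{*},\sum_k m_k^{d}x_k\rangle=\sum_k\langle x^{*},x_k\rangle m_k^{d}$ is a finite linear combination of real purely discontinuous martingales. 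By uniqueness this gives $A^{p,d}_X M=\sum_k m_k^{d}x_k=(A^{p,d}_{\mathbb R}\otimes\mathrm{Id}_X)M$ on a dense subspace, and since $A^{p,d}_X$ is bounded by \eqref{eq:MYdecLpineq}, it is the unique continuous extension of $A^{p,d}_{\mathbb R}\otimes\mathrm{Id}_X$; the continuous part follows identically, or from $A^{p,c}_X=\mathrm{Id}-A^{p,d}_X$.

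The genuinely hard analytic input --- the $L^p$-boundedness of the purely-discontinuous projection, where the UMD property is essential --- is imported wholesale from \cite{Yarodecom}, so I do not expect a deep obstacle here. The care in the argument lies instead in the bookkeeping of the last step: pinning down the topology on the space of $X$-valued $L^p$-martingales, the density of finite-rank martingales, the stability of the decomposition under restriction to $[0,T]$, and the permanence of ``purely discontinuous'' both under passing to functionals and under $L^p$-limits. These are routine but must be spelled out to make the tensor identities rigorous.
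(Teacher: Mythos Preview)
Your proposal is correct and follows exactly the paper's approach: the paper simply states that the lemma ``follows from \cite[Theorem 3.1]{Yarodecom}'' and gives no further argument, so you are in fact supplying strictly more detail than the original. The additional justifications you outline for the $X_0$-valuedness (via uniqueness and the UMD property of closed subspaces) and for the tensor identities (density of simple martingales plus boundedness) are the natural ones and pose no difficulty.
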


Recall that for a given measure space $(S,\Sigma,\mu)$, the linear
space of all real-valued measurable functions is denoted by
$L^0(S)$.

\begin{definition}\label{def:Bfs}
Let $(S,\Sigma,\mu)$ be a measure space. Let $n:L^0(S)\to [0,\infty]$ be a function which satisfies the following properties:
  \begin{enumerate}[(i)]
\item $n(x) = 0$ if and only if
$x=0$,
\item for all $x,y\in L^0(S)$ and $\lambda\in \R$, $n(\lambda x) = |\lambda| n(x)$ and $n(x+y)\leq n(x)+n(y)$,
    \item if $x \in L^0(S)$, $y \in L^0(S)$, and $|x| \leq |y|$, then $n(x) \leq n(y)$,
    \item if $0 \leq x_n \uparrow x$ with $(x_n)_{n=1}^\infty$ a sequence in $L^0(S)$ and $x \in L^0(S)$, then  $n(x) = \sup_{n \in \N}n(x_n)$.
  \end{enumerate}
Let $X$ denote the space of all $x\in L^0(S)$ for which
$\|x\|:=n(x)<\infty$. Then $X$ is called the {\em normed function
space associated to $n$}. It is called a {\em Banach function
space} when $(X,\|\cdot\|_X)$ is complete.
\end{definition}

 We refer the reader to
\cite[Chapter 15]{Zaa67} for details on Banach function spaces.

\begin{remark}\label{rem:continmeasure}
Let $X$ be a Banach function space over a measure space
$(S,\Sigma,\mu)$. Then $X$  is continuously embedded into $L^0(S)$
endowed with the topology of convergence in measure on sets of
finite measure. Indeed, assume $x_n\to x$ in $X$ and let $A \in\Sigma$ be of
finite measure. We claim that $\one_A x_n\to \one_A x$ in measure. For this it suffices to show that every subsequence of $(x_n)_{n\geq 1}$ has a further subsequence which convergences a.e.\ to $x$. 
Let $(x_{n_k})_{k\geq 1}$ be a subsequence. Choose a subsubsequence $(\one_A x_{n_{k_\ell}})_{\ell\geq 1} =: (y_\ell)_{\ell\geq 1}$ such that $\sum_{\ell=1}^{\infty} \|y_\ell - x\| < \infty$.
Then by \cite[Exercise 64.1]{Zaa67} $\sum_{\ell=1}^{\infty} |y_\ell - x|$ converges in $X$. In particular, $\sum_{\ell=1}^{\infty} |y_\ell - x|<\infty$ a.e. Therefore, $y_{\ell}\to x$ a.e. as desired.
\end{remark}

Given a Banach function space $X$ over a measure space $S$ and
Banach space $E$, let $X(E)$ denote the space of all strongly
measurable functions $f:S\to E$ with $\|f\|_{X(E)} :=
\big\|s\mapsto \|f(s)\|_E\big\|_X \in X$. The space $X(E)$ becomes
a Banach space when equipped with the norm $\|f\|_{X(E)}$.

A Banach function space has the UMD property if and only if
\eqref{eq:BurkholderX} holds for some (or equivalently, for all)
$p\in (1, \infty)$ (see \cite{Rubio86}). A
broad class of Banach function spaces with UMD is given by the reflexive Lorentz--Zygmund spaces (see \cite{Cobos86}) and the reflexive Musielak--Orlicz spaces (see \cite{LVY18}).

\begin{definition}
$N:\mathbb R_+ \times \Omega \times S \to \mathbb R$ is called a (continuous) (local) martingale field if $N|_{[0,t]\times \Omega \times S}$ is $\mathcal B([0,t])\otimes\mathcal F_t \otimes  \Sigma$-measurable for all $t\geq 0$  and $N(\cdot, \cdot,s)$ is a (continuous) (local) martingale with respect to $(\F_t)_{t\geq 0}$ for almost all $s\in S$.
\end{definition}

Let $X$ be a Banach space, $I \subset \mathbb R$ be a closed interval (perhaps, infinite). A function $f:I \to X$ is called {\it c\`adl\`ag} (an acronym for the French phrase ``continue \`a droite, limite \`a gauche'') if $f$ is right continuous and has limits from the left-hand side. We define a {\it Skorohod space} $\mathcal D(I; X)$ as a linear space consisting of all c\`adl\`ag functions $f:I \to X$. We denote the linear space of all bounded c\`adl\`ag functions $f:I \to X$ by $\mathcal D_b(I;X)$.

\begin{lemma}
 $\mathcal D_b(I;X)$ equipped with the norm $\|\cdot\|_{\infty}$ is a Banach space.
\end{lemma}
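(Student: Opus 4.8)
The plan is the standard one, since the only assertion with content is completeness: that $\mathcal D_b(I;X)$ is a linear space is given, and it is immediate that $\|\cdot\|_\infty$ is a norm on it (it is the restriction of the sup-norm on $\ell^\infty(I;X)$). So I would start from a Cauchy sequence $(f_n)_{n\geq 1}$ in $\mathcal D_b(I;X)$. For each fixed $t\in I$ the sequence $(f_n(t))_{n\geq 1}$ is Cauchy in $X$, and since $X$ is a Banach space it converges to some element which I call $f(t)$. A routine argument then upgrades this to uniform convergence: given $\varepsilon>0$ pick $N$ with $\|f_n-f_m\|_\infty<\varepsilon$ for all $n,m\geq N$, and let $m\to\infty$ to get $\|f_n(t)-f(t)\|\leq\varepsilon$ for every $t\in I$, hence $\|f_n-f\|_\infty\leq\varepsilon$ for $n\geq N$. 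In particular $f$ is bounded, because $\|f\|_\infty\leq\|f_N\|_\infty+\varepsilon$.

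It remains to check that the uniform limit $f$ is càdlàg. For right-continuity at a point $t\in I$ I would use the usual $\varepsilon/3$ split: choose $n$ with $\|f_n-f\|_\infty<\varepsilon/3$, then use right-continuity of $f_n$ at $t$ to find $\delta>0$ with $\|f_n(s)-f_n(t)\|<\varepsilon/3$ for all $s\in I\cap[t,t+\delta)$, and conclude $\|f(s)-f(t)\|<\varepsilon$ on that set via the triangle inequality. For the existence of the left limit of $f$ at $t$ it is cleaner to verify a Cauchy criterion directly: for $s,s'\in I$ lying to the left of $t$,
\[
\|f(s)-f(s')\|\leq 2\|f_n-f\|_\infty+\|f_n(s)-f_n(s')\|,
\]
and once $n$ is fixed with $\|f_n-f\|_\infty$ small, the last term can be made small for all $s,s'$ sufficiently close to $t$ from the left because $f_n$ has a left limit there. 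Hence $\{f(s)\}$ is Cauchy as $s\uparrow t$ and, by completeness of $X$, converges; thus $f$ has a left limit at $t$. This shows $f\in\mathcal D_b(I;X)$, so the space is complete.

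I do not expect a genuine obstacle. The only place requiring a little care is the existence of left limits for $f$: a pointwise limit of functions with left limits need not have left limits, so one must genuinely use the uniformity of the convergence together with the completeness of $X$, exactly as in the display above. Everything else is bookkeeping with the triangle inequality.
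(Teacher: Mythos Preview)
Your proposal is correct and is precisely the argument the paper has in mind: the paper's proof consists of the single sentence ``The proof is analogous to the proof of the same statement for continuous functions,'' and what you wrote is exactly that analogy spelled out. The only substantive point---that the existence of left limits for the uniform limit requires uniform convergence and completeness of $X$, not just pointwise convergence---is handled correctly by your Cauchy-criterion argument.
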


\begin{proof}
 The proof is analogous to the proof of the same statement for continuous functions.
\end{proof}

Let $X$ be a Banach space, $\tau$ be a stopping time, $V:\mathbb R_+ \times \Omega \to X$ be a c\`adl\`ag process. Then we define $\Delta V_{\tau}:\Omega \to X$ as follows
\[
 \Delta V_{\tau} := V_{\tau} - \lim_{\eps\to 0} V_{(\tau-\eps)\vee 0}.
\]

\section{Lattice Doob's maximal inequality}

Doob's maximal $L^p$-inequality immediately implies that for martingale fields
\[
\big\|\sup_{t\geq 0} \|N(t,\cdot)\|_X\big\|_{L^p(\Omega)} \leq \frac{p}{p-1}\sup_{t\geq 0} \|N(t)\|_{L^p(\Omega;X)},\;\;\; 1<p<\infty.
\]
In the next lemma we prove a stronger version of Doob's maximal $L^p$-inequality. As a consequence in Theorem \ref{thm:DoobLp} we will obtain the same result in a more general setting.

\begin{lemma}\label{lem:estUMDrubio}
Let $X$ be a UMD Banach function space and let $p\in (1, \infty)$. Let $N$ be a c\`adl\`ag martingale field with values in a finite dimensional subspace of $X$. Then  for all $T>0$,
\[
\big\|\sup_{t\in [0,T]}|N(t,\cdot)|\big\|_{L^p(\Omega;X)} \eqsim_{p,X} \sup_{t\in [0,T]} \|N(t)\|_{L^p(\Omega;X)}
\]
whenever one of the expression is finite.
\end{lemma}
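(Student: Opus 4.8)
The plan is to reduce the lattice-valued maximal inequality to two ingredients that are already available: the boundedness of the lattice (Hardy–Littlewood) maximal operator on $L^p(\Omega;X)$ when $X$ is UMD, and the scalar Doob inequality applied pointwise in $s\in S$. Since $N$ takes values in a finite-dimensional subspace $X_0\subseteq X$, we may fix a basis $x_1,\dots,x_d$ of $X_0$ and write $N(t,\omega,\cdot)=\sum_{i=1}^d c_i(t,\omega)x_i$; the coordinate processes $c_i$ are then scalar càdlàg martingales, and the measurability hypothesis on the field guarantees joint measurability in $(t,\omega,s)$. The upper estimate $\sup_{t\le T}\|N(t)\|_{L^p(\Omega;X)}\le \big\|\sup_{t\le T}|N(t,\cdot)|\big\|_{L^p(\Omega;X)}$ is trivial by the lattice property of $X$, so the content is the reverse bound.

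For the reverse bound, the key step is to discretize time: for a partition $0=t_0<t_1<\dots<t_n=T$, the process $(N(t_k,\cdot,\cdot))_{k=0}^n$ is a finite $X$-valued martingale, and by the right-continuity of $N(\cdot,\omega,s)$ we have $\sup_{t\in[0,T]}|N(t,\omega,s)| = \sup_k \sup_{t\in[t_{k-1},t_k)}|N(t,\omega,s)|$, which is approximated from below by refining partitions (monotone convergence in $X$, via Definition~\ref{def:Bfs}(iv)). So it suffices to bound $\big\|\max_{0\le k\le n}|N(t_k,\cdot,\cdot)|\big\|_{L^p(\Omega;X)}$ by $C_{p,X}\sup_{t\le T}\|N(t)\|_{L^p(\Omega;X)}$ with a constant independent of the partition. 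This is precisely a \emph{lattice} Doob inequality for discrete $X$-valued martingales, and it is exactly the type of statement that the boundedness of the lattice maximal function yields — indeed, for a discrete martingale $(f_k)$ one controls $\max_k|f_k|$ (the lattice supremum) by the lattice maximal function of the terminal value $f_n$, whose $L^p(\Omega;X)$ norm is $\eqsim_{p,X}\|f_n\|_{L^p(\Omega;X)}$ by the UMD property (Rubio de Francia; see \cite{Bour:BCP,GCMT93,Rubio86}). One runs this on the product measure space $\Omega\times S$ (or conditions on $\mathcal F_T$ and applies the lattice maximal bound on $X$ fibrewise in $\omega$), using that finite-dimensionality makes all the measurability and the passage between $X$-valued and scalar-coordinate formulations routine.

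The main obstacle is the correct formulation and application of the lattice maximal inequality in the \emph{martingale} (rather than translation/dilation) setting: one needs that for an $X$-valued martingale $(f_k)_{k=0}^n$ the lattice supremum $\sup_k|f_k|$ satisfies $\big\|\sup_k|f_k|\big\|_{L^p(\Omega;X)}\lesssim_{p,X}\|f_n\|_{L^p(\Omega;X)}$, which is the "$L^p$ lattice Doob inequality." This is known to be equivalent to the UMD property of $X$ and follows from the same circle of ideas as the boundedness of the dyadic lattice maximal operator; the careful point is to phrase the discrete martingale as a martingale with respect to a filtration on $\Omega\times S$ that is constant in the $S$-variable, so that the fibrewise lattice maximal bound can be integrated. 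A secondary technical point is justifying the limit over refining partitions: the finiteness assumption on one of the two sides, together with Definition~\ref{def:Bfs}(iv) and Fatou in $L^p(\Omega)$, lets one pass from the partition-wise inequality to the supremum over $t\in[0,T]$; the equivalence "whenever one of the expressions is finite" then follows because each side dominates a common quantity up to constants. Once Lemma~\ref{lem:estUMDrubio} is in place, Theorem~\ref{thm:DoobLp} will extend it beyond the finite-dimensional case by approximation.
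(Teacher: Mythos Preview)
Your outline matches the paper's strategy: the lower bound is trivial, one discretizes in time, passes to the limit via Definition~\ref{def:Bfs}(iv), and reduces everything to the discrete lattice Doob inequality $\big\|\max_{0\le k\le n}|M_k|\big\|_{L^p(\Omega;X)}\lesssim_{p,X}\|M_n\|_{L^p(\Omega;X)}$. The gap is in how you propose to extract this last inequality from the cited references. The results in \cite{Bour:BCP,GCMT93,Rubio86} concern the \emph{dyadic} (or Hardy--Littlewood) lattice maximal operator, not the martingale maximal operator for an arbitrary filtration, and your suggested bridges do not close this. Working on $\Omega\times S$ with a filtration ``constant in the $S$-variable'' either reproduces the original $X$-valued problem verbatim or, if one uses the product measure, yields only an $L^p(\Omega\times S)=L^p(\Omega;L^p(S))$ bound, which is the desired estimate only in the special case $X=L^p(S)$. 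A ``fibrewise in $s$'' application of scalar Doob gives $\|\sup_k|M_k(\cdot,s)|\|_{L^p(\Omega)}\lesssim_p\|M_n(\cdot,s)\|_{L^p(\Omega)}$ for each $s$, but this cannot be upgraded to an $L^p(\Omega;X)$ estimate because the norms $\|\cdot\|_{L^p(\Omega;X)}$ and $\|\cdot\|_{X(L^p(\Omega))}$ are in general incomparable.

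The paper's resolution is to first treat the case where $(M_k)$ is a Paley--Walsh martingale: then the conditional expectations are genuinely dyadic averages, and Rubio de Francia's dyadic lattice maximal bound \cite[pp.~199--200 and Theorem~3]{Rubio86} applies directly. The general filtration is then reduced to this case by passing to a divisible probability space and approximating $(M_k)$ by Paley--Walsh martingales, as in \cite[Corollary~3.6.7]{HyNeVeWe16}. This Paley--Walsh reduction is the missing mechanism in your proposal.
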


\begin{proof}
Clearly, the left-hand side dominates the right-hand side. Therefore, we can assume  the right-hand side is finite and in this case we have
$$
\|N(T)\|_{L^p(\Omega;X)} =  \sup_{t\in [0,T]} \|N(t)\|_{L^p(\Omega;X)}<\infty.
$$
Since $N$ takes values in a finite dimensional subspace it follows from Doob's $L^p$-inequality (applied coordinatewise) that the left-hand side is finite.

Since $N$ is a c\`adl\`ag martingale field and by Definition
\ref{def:Bfs}$(iv)$ we have that
\[
\lim_{n\to \infty}\big\|\sup_{0\leq j\leq n}|N(jT/n,\cdot)|\big\|_{L^p(\Omega;X)} = \big\|\sup_{t\in [0,T]}|N(t,\cdot)|\big\|_{L^p(\Omega;X)}.
\]
Set $M_j = N_{jT/n}$ for $j\in \{0,\ldots, n\}$ and $M_{j} = M_n$ for $j> n$. It remains to prove
\[\big\|\sup_{0\leq j\leq n}|M_j(\cdot)|\big\|_{L^p(\Omega;X)} \leq C_{p,X} \|M_n\|_{L^p(\Omega;X)}.\]
If $(M_j)_{j=0}^n$ is a Paley--Walsh martingale (see \cite[Definition 3.1.8 and Proposition 3.1.10]{HyNeVeWe16}), this estimate follows from the boundedness of the dyadic lattice maximal operator \cite[pp.\ 199--200 and Theorem 3]{Rubio86}. In the general case one can replace $\Omega$ by a divisible probability space and approximate $(M_j)$ by Paley-Walsh martingales in a similar way as in \cite[Corollary 3.6.7]{HyNeVeWe16}.
\end{proof}

\begin{theorem}[Doob's maximal $L^p$-inequality]\label{thm:DoobLp}
Let $X$ be a UMD Banach function space over a $\sigma$-finite measure space and let $p\in (1, \infty)$. Let $M:\mathbb R_+\times\Omega\to X$ be a martingale such that
\begin{enumerate}
\item for all $t\geq 0$, $M(t)\in L^p(\Omega;X)$;
\item for a.a $\omega\in \O$, $M(\cdot,\omega)$ is in $\mathcal D([0,\infty);X)$.
\end{enumerate}
Then there exists a martingale field $N\in L^p(\Omega; X(\mathcal
D_b([0,\infty))))$ such that for a.a.\ $\omega\in \O$, all $t\geq
0$ and a.a.\ $s\in  S$, $N(t,\omega,s) = M(t,\omega)(s)$ and
\begin{equation}\label{eq:normestNM}
\big\|\sup_{t\geq 0}|N(t,\cdot)|\big\|_{L^p(\Omega;X)} \eqsim_{p,X} \sup_{t\geq 0}\|M(t,\cdot)\|_{L^p(\Omega;X)}.
\end{equation}
Moreover, if $M$ is continuous, then $N$ can be chosen to be continuous as well.
\end{theorem}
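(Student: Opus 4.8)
The plan is to construct $N$ from $M$ by a density-plus-completion argument, using Lemma~\ref{lem:estUMDrubio} to pass from finite-dimensional martingale fields to the general case. First I would reduce to the case where $M$ takes values in a closed separable subspace $X_0$ of $X$; this is justified because $M$ is $\mathcal D([0,\infty);X)$-valued for a.a.\ $\omega$ and there are countably many rational times $t$, so the closed span of the ranges $\{M(t,\omega): t\in\Q_+, \omega\}$ is separable (using strong measurability and a suitable exhaustion). On $X_0$ we can pick an increasing sequence of finite-dimensional subspaces $X_n$ together with contractive projections $P_n$ (or at least uniformly bounded ones, which exist on UMD Banach function spaces via a lattice structure argument) such that $P_n x \to x$ for every $x\in X_0$. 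Set $M_n := P_n M$; each $M_n$ is a martingale with values in the finite-dimensional space $X_n$, so by Doob's $L^p$-inequality applied coordinatewise it has a c\`adl\`ag version, and Definition~\ref{def:Bfs}$(iv)$ together with c\`adl\`ag regularity upgrades this to a c\`adl\`ag martingale field $N_n \in L^p(\Omega; X(\mathcal D_b([0,\infty))))$.

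Next I would show $(N_n)$ is Cauchy in $L^p(\Omega;X(\mathcal D_b([0,\infty))))$. The key point is that $N_n - N_m$ is the martingale field associated to the martingale $(P_n - P_m)M$, which takes values in the finite-dimensional space $X_n + X_m$, so Lemma~\ref{lem:estUMDrubio} applies (on each finite interval $[0,T]$, then let $T\to\infty$ by monotone convergence in Definition~\ref{def:Bfs}$(iv)$) and gives
\[
\big\|\sup_{t\geq 0}|N_n(t,\cdot) - N_m(t,\cdot)|\big\|_{L^p(\Omega;X)} \eqsim_{p,X} \sup_{t\geq 0}\|(P_n - P_m)M(t)\|_{L^p(\Omega;X)}.
\]
The right-hand side tends to $0$ as $n,m\to\infty$: indeed $\|(P_n-P_m)M(t)\|_{L^p(\Omega;X)} \to 0$ for each fixed $t$ by dominated convergence (since $P_k M(t)(s) \to M(t)(s)$ pointwise and $\|P_k M(t)\|_X \lesssim \|M(t)\|_X$), and the supremum over $t$ can be controlled by reducing to $t = \infty$, i.e.\ using that $M$ is $L^p$-bounded and a closable/maximal-function argument, or more carefully by exhausting $[0,\infty)$ and using that $\sup_t \|M(t)\|_{L^p(\Omega;X)} = \lim_{t\to\infty}\|M(t)\|_{L^p(\Omega;X)}$ when $M$ is an $L^p$-bounded martingale. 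Since $\mathcal D_b([0,\infty))$ is a Banach space (by the lemma in the excerpt) and hence so is $X(\mathcal D_b([0,\infty)))$ and its $L^p(\Omega;\cdot)$-Bochner space, the Cauchy sequence $(N_n)$ converges to some limit $N$ in this space. Passing to an a.e.-convergent subsequence shows that for a.a.\ $\omega$ and a.a.\ $s$ the paths $N_{n_k}(\cdot,\omega,s)$ converge uniformly, so the limit $N(\cdot,\omega,s)$ is c\`adl\`ag, and $N(t,\omega,s) = \lim_k P_{n_k}M(t,\omega)(s) = M(t,\omega)(s)$ for each fixed $t$, a.e.

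Then I would verify the remaining structural claims. The martingale-field measurability of $N$ (i.e.\ $\mathcal B([0,t])\otimes\mathcal F_t\otimes\Sigma$-measurability of $N|_{[0,t]\times\Omega\times S}$) passes to the limit since each $N_n$ has this property and $L^p$-convergence preserves joint measurability up to modification; the martingale property of $N(\cdot,\cdot,s)$ for a.a.\ $s$ follows from $N(t,\cdot,s) = M(t,\cdot)(s)$ a.e.\ together with the conditional-expectation characterization (again passing through an a.e.-convergent subsequence to identify conditional expectations). The norm equivalence \eqref{eq:normestNM} is obtained by applying Lemma~\ref{lem:estUMDrubio} to each $N_n$ on $[0,T]$, letting $T\to\infty$, and then $n\to\infty$, using $\|N_n - N\| \to 0$ on the left and $\|M_n(t)\|_{L^p} \to \|M(t)\|_{L^p}$ on the right. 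Finally, if $M$ is continuous, then so is each $M_n$ (coordinatewise), hence each $N_n$ is a continuous martingale field, and uniform convergence along the a.e.-convergent subsequence forces $N$ to be continuous as well. The main obstacle I anticipate is the uniform-in-$t$ control of $\|(P_n - P_m)M(t)\|_{L^p(\Omega;X)}$ needed for the Cauchy estimate: pointwise-in-$t$ convergence is easy, but to get it uniformly one must exploit that $M$ is an $L^p$-bounded c\`adl\`ag martingale (so that its "terminal value" $M_\infty$ exists in $L^p(\Omega;X)$ and $M(t) = \E(M_\infty\mid\mathcal F_t)$), together with the lattice Doob inequality of Lemma~\ref{lem:estUMDrubio} applied to the martingale field of $(P_n - P_m)M$ itself — this is really where the UMD hypothesis does its work and where care is needed to handle the $\sigma$-finite (rather than finite) measure space $S$.
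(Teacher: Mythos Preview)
The main gap is the existence of the projections $P_n$. You claim that uniformly bounded finite-rank projections $P_n$ on $X_0$ with $P_n\to I$ strongly exist ``via a lattice structure argument,'' but this is the bounded approximation property for $X_0$, and it is not automatic. First, $X_0$ is an arbitrary closed subspace of $X$, not a sublattice, so lattice structure is not directly available on it. Second, even on $X$ itself the obvious candidates can fail: the $\mu$-conditional expectations onto finite sub-$\sigma$-algebras are not in general uniformly bounded on a UMD Banach function space over $(S,\Sigma,\mu)$ (take $X=L^q(w\,d\mu)$ with a weight $w$ failing the relevant Muckenhoupt condition). Some choice of projections may well work for every separable UMD Banach function space, but that is a nontrivial structural fact you have not supplied. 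By contrast, the issue you flag as ``the main obstacle''---uniform-in-$t$ control of $\|(P_n-P_m)M(t)\|_{L^p}$---is routine once you reduce to $M$ constant after $T$, since conditional Jensen gives $\sup_t\|(P_n-P_m)M(t)\|_{L^p}\le\|(P_n-P_m)M(T)\|_{L^p}$.

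The paper avoids the projection issue entirely. It first reduces to $M$ constant after time $T$, then approximates the terminal value $M(T)\in L^p(\Omega;X)$ by simple random variables $\xi_n$ (density of simple functions in a Bochner space needs nothing about $X$) and sets $M_n(t)=\E(\xi_n\mid\F_t)$; each $M_n$ takes values in the finite-dimensional span of the values of $\xi_n$, so Lemma~\ref{lem:estUMDrubio} applies and the Cauchy estimate is immediate from $\xi_n\to M(T)$. The price is that $M_n$ need not be continuous even when $M$ is, so for the continuous-path assertion the paper invokes the Meyer--Yoeurp decomposition (Lemma~\ref{lem:XUMDdecom}) to replace each $M_n$ by its continuous part $M_n^c$ and shows $M_n^c(T)\to M(T)$. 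Your approach, \emph{if} the projections existed, would handle the continuous case more cheaply since $P_nM$ inherits continuity from $M$ directly; that is a genuine advantage, but only once the gap above is closed.
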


\begin{proof}
We first consider the case where $M$ becomes constant after some time $T>0$. Then
\[\sup_{t\geq 0}\|M(t,\cdot)\|_{L^p(\Omega;X)} = \|M(T)\|_{L^p(\Omega;X)}.\]
Let $(\xi_n)_{n\geq 1}$  be simple random variables such that $\xi_n\to M(T)$ in
$L^p(\Omega;X)$. Let $M_n(t) = \E(\xi_n|\F_t)$ for $t\geq 0$. Then by Lemma \ref{lem:estUMDrubio}
\[
\big\|\sup_{t\geq 0} |N_n(t,\cdot) - N_m(t,\cdot)|\big\|_{L^p(\Omega;X)}\eqsim_{p,X} \big\||M_n(T,\cdot) - M_m(T,\cdot)|\big\|_{L^p(\Omega;X)} \to 0
\]
as $n,m\to \infty$.
Therefore, $(N_n)_{n\geq 1}$ is a Cauchy sequence and hence converges to some $N$ from the space $L^p(\Omega;X(\mathcal D_b([0,\infty))))$. Clearly, $N(t,\cdot) = M(t)$ and
\eqref{eq:normestNM} holds in the special case that $M$ becomes constant after $T>0$.

In the case $M$ is general, for each $T>0$ we can set $M^T(t) = M(t\wedge T)$. Then for each $T>0$ we obtain a martingale field $N^T$ as required. Since $N^{T_1} = N^{T_2}$ on $[0,T_1\wedge T_2]$, we can define a martingale field $N$ by setting $N(t,\cdot) = N^T(t,\cdot)$ on $[0,T]$.
Finally, we note that
\[\lim_{T\to\infty} \sup_{t\geq 0} \|M^T(t)\|_{L^p(\Omega;X)} = \sup_{t\geq 0} \|M(t)\|_{L^p(\Omega;X)}.\]
Moreover, by Definition \ref{def:Bfs}$(iv)$ we have
\[\lim_{T\to\infty} \big\|\sup_{t\geq 0}|N^T(t,\cdot)|\big\|_{L^p(\Omega;X)} = \big\|\sup_{t\geq 0}|N(t,\cdot)|\big\|_{L^p(\Omega;X)},\]
Therefore the general case of \eqref{eq:normestNM} follows by taking limits.

Now let $M$ be continuous, and let $(M_n)_{n\geq 1}$ be as before.
By the same argument as in the first part of the proof we can
assume that there exists $T>0$ such that $M_t=M_{t\wedge T}$ for
all $t\geq 0$.
 By Lemma \ref{lem:XUMDdecom} there exists a unique decomposition $M_n = M_n^c + M_n^d$ such that $M_n^d$ is purely discontinuous and starts at zero and $M_n^c$ has continuous paths a.s. Then by \eqref{eq:MYdecLpineq}
\[
\|M(T)-M_n^c(T)\|_{L^p(\Omega;X)}\leq \beta_{p,X} \|M(T)-M_n(T)\|_{L^p(\Omega;X)}\to 0.
\]
Since $M_n^c$ takes values in a finite dimensional subspace of $X$ we can define a martingale field $N_n$ by $N_n(t,\omega,s) = M_n^c(t,\omega)(s)$. Now by Lemma \ref{lem:estUMDrubio}
\[\big\|\sup_{0\leq t\leq T} |N_n(t,\cdot) - N_m(t,\cdot)|\big\|_{L^p(\Omega;X)}\eqsim_{p,X} \big\||M_n^c(T,\cdot) - M_m^c(T,\cdot)|\big\|_{L^p(\Omega;X)} \to 0.\]
Therefore, $(N_n)_{n\geq 1}$ is a Cauchy sequence and hence converges to some $N$ from the space $L^p(\Omega;X(C_b([0,\infty))))$. Analogously to the first part of the proof, $N(t,\cdot) = M(t)$ for all $t\geq 0$.
\end{proof}

\begin{remark}\label{rem:DeltaMtau(s)=DeltaN(s)tau}
 Note that due to the construction of $N$ we have that $\Delta M_{\tau}(s) = \Delta N(\cdot, s)_{\tau}$ for any stopping time $\tau$ and almost any $s\in S$. Indeed, let $(M_n)_{n\geq 1}$ and $(N_n)_{n\geq 1}$ be as in the proof of Theorem \ref{thm:DoobLp}. Then on the one hand
 \begin{align*}
  \|\Delta M_{\tau} - \Delta (M_n)_{\tau}\|_{L^p(\Omega; X)} &\leq \bigl\|\sup_{0\leq t\leq T}\|M(t)-M_n(t)\|_X\bigr\|_{L^p(\Omega)}\\
  &\eqsim_p\|M(T)-M_n(T)\|_{L^p(\Omega;X)} \to 0,\;\;\; n\to\infty.
 \end{align*}
On the other hand
 \begin{align*}
  \|\Delta N_{\tau} - \Delta (N_n)_{\tau}\|_{L^p(\Omega; X)} &\leq \bigl\|\sup_{0\leq t\leq T}|N(t)-N_n(t)|\bigr\|_{L^p(\Omega;X)}\\
  &\eqsim_{p,X}\bigl\||N(T)-N_n(T)|\bigr\|_{L^p(\Omega;X)} \to 0,\;\;\; n\to\infty.
 \end{align*}
 Since $\|M_n(t) - N_n(t,\cdot)\|_{L^p(\Omega; X)} = 0$ for all $n\geq 0$, we have that by the limiting argument $\|\Delta M_{\tau} - \Delta N_\tau(\cdot)\|_{L^p(\Omega; X)}=0$, so the desired follows from Definition \ref{def:Bfs}$(i)$.
\end{remark}

One could hope there is a more elementary approach to derive continuity of $N$ in the case $M$ is continuous: if the filtration $\widetilde {\mathbb F} := (\widetilde{\mathcal F}_t)_{t\geq 0}$ is generated by $M$, then $M(s)$ is $\widetilde{\mathbb F}$-adapted for a.e.\ $s\in S$, and one might expect that $M$ has a continuous version. Unfortunately, this is not true in general as follows from the next example.

\begin{example}
There exists a continuous martingale $M:\mathbb R_+ \times \Omega \to \mathbb R$, a filtration $\widetilde {\mathbb F} = (\widetilde{\mathcal F}_t)_{t\geq 0}$ generated by $M$ and all $\mathbb P$-null sets, and a purely discontinuous nonzero $\widetilde {\mathbb F}$-martingale $N:\mathbb R_+ \times \Omega \to \mathbb R$. Let $W:\mathbb R_+ \times \Omega \to \mathbb R$ be a Brownian motion, $L:\mathbb R_+ \times \Omega \to \mathbb R$ be a Poisson process such that $W$ and $L$ are independent. Let $\mathbb F = (\mathcal F_t)_{t\geq 0}$ be the filtration generated by $W$ and $L$. Let $\sigma$ be an $\mathbb F$-stopping time defined as follows
 \[
  \sigma = \inf\{u\geq 0:\Delta L_u \neq 0\}.
 \]
Let us define
$$
M:= \int\mathbf 1_{[0,\sigma]}\ud W = W^{\sigma}.
$$
Then $M$ is a martingale. Let  $\widetilde {\mathbb F} := (\widetilde{\mathcal F}_t)_{t\geq 0}$ be generated by $M$. Note that $\widetilde {\mathcal F}_t \subset \mathcal F_t$ for any $t\geq 0$. Define a random variable
 \[
  \tau=\inf\{t\geq 0:\exists u\in [0,t) \,\text{such that}\, M \, \text{is a constant on}\, [u,t]\}.
 \]
Then $\tau = \sigma$ a.s. Moreover, $\tau$ is a $\widetilde {\mathbb F}$-stopping time since for each $u\geq 0$
\begin{align*}
  \mathbb P\{\tau = u\} = \mathbb P\{\sigma = u\}= \mathbb P\{\Delta L^{\sigma}_u \neq 1\} \leq \mathbb P\{\Delta L_u \neq 1\} = 0,
\end{align*}
and hence
\[
 \{\tau\leq u\} =\{\tau<u\} \cup \{\tau=u\}\subset \widetilde{ \mathcal F}_u.
\]
Therefore $N:\mathbb R_+ \times \Omega \to \mathbb R$ defined by
$$
N_t:= \mathbf 1_{[\tau,\infty)}(t) -t\wedge \tau\;\;\;\; t\geq 0,
$$
is an $\widetilde{\mathbb F}$-martingale since it is $\widetilde{\mathbb F}$-measurable and since $N_t = (L_{t}-t)^{\sigma}$ a.s. for each $t\geq 0$, hence for each $u\in [0, t]$
\[
 \mathbb E (N_t|\widetilde {\mathcal F}_u) = \mathbb E (\mathbb E (N_t|{\mathcal F}_u)|\widetilde {\mathcal F}_u)
 =\mathbb E (\mathbb E ((L_{t}-t)^{\sigma}|{\mathcal F}_u)|\widetilde {\mathcal F}_u) = (L_{u}-u)^{\sigma} = N_u
\]
due to the fact that $t\mapsto L_t-t$ is an $\widetilde{\mathbb F}$-measurable $\mathbb F$-martingale (see \cite[Problem 1.3.4]{KS}). But $(N_t)_{t\geq 0}$ is not continuous since $(L_t)_{t\geq 0}$ is not continuous.
\end{example}

\section{Main result}

Theorem \ref{thm:mainintro} will be a consequence of the following more general result.
\begin{theorem}\label{thm:main}
Let $X$ be a UMD Banach function space over a $\sigma$-finite measure space $(S, \Sigma, \mu)$ and let $p\in (1, \infty)$. Let $M:\R_+\times\O\to X$ be a local $L^p$-martingale with respect to $(\F_t)_{t\geq 0}$ and assume $M(0,\cdot) = 0$. Then there exists a mapping $N:\R_+\times \O\times S\to \R$  such that
\begin{enumerate}[$(1)$]
\item for all $t\geq0$ and a.a. $\omega\in \O$, $N(t,\omega,\cdot) = M(t,\omega)$,
\item $N$ is a local martingale field,
\item the following estimate holds
\begin{equation}\label{eq:BDG2}
\big\|\sup_{t\geq 0} |N(t,\cdot,\cdot)| \big\|_{L^p(\Omega;X)}  \eqsim_{p,X} \big\|\sup_{t\geq 0}\|M(t,\cdot)\|_X \big\|_{L^p(\Omega)} \eqsim_{p,X} \|[N]_{\infty}^{1/2}\|_{L^p(\Omega;X)}.
\end{equation}
\end{enumerate}
\end{theorem}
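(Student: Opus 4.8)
\emph{Overall strategy.} I would reduce \eqref{eq:BDG2} to the scalar square‑function inequality \eqref{eq:BurkholderX} and the lattice Doob inequality of Theorem~\ref{thm:DoobLp}, using the Meyer--Yoeurp decomposition (Lemma~\ref{lem:XUMDdecom}) to peel off the continuous part of $M$ and then sampling $M$ along dyadic partitions. By a standard localization argument we may assume $M$ is an honest $L^p$‑martingale: if $\tau_k\uparrow\infty$ reduces $M$, then each of the three quantities in \eqref{eq:BDG2} computed for $M^{\tau_k}$ increases to the corresponding quantity for $M$ — for the quadratic variation because $[N^{\tau_k}]_\infty=[N]_{\tau_k}\uparrow[N]_\infty$, for the maximal functions because $\sup_t|N(t\wedge\tau_k,\cdot,\cdot)|\uparrow\sup_t|N(t,\cdot,\cdot)|$ — so the equivalence passes to the limit by Definition~\ref{def:Bfs}$(iv)$ and monotone convergence on $\O$. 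For the second equivalence we may, by stopping at a deterministic time and letting it tend to $\infty$, further assume $M(t)=M(t\wedge T)$ for all $t$. Since UMD spaces are reflexive, $M$ has a c\`adl\`ag version, so Theorem~\ref{thm:DoobLp} produces the martingale field $N$ together with properties $(1)$, $(2)$ and the first equivalence in \eqref{eq:BDG2} (combine it with scalar Doob applied to the submartingale $t\mapsto\|M(t)\|_X$). As $[N]$ is obtained as an a.s.\ limit of Riemann sums, it is again a nonnegative nondecreasing martingale field, and it remains only to prove the second equivalence.

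\emph{Meyer--Yoeurp splitting.} I would write $M=M^c+M^d$ via Lemma~\ref{lem:XUMDdecom}, with associated martingale fields $N^c,N^d$ coming from Theorem~\ref{thm:DoobLp}. Since the decomposition operators factor as $A^{p,d}_X=A^{p,d}_{\R}\otimes\textnormal{Id}_X$, for a.e.\ $s\in S$ the pair $(N^c(\cdot,\cdot,s),N^d(\cdot,\cdot,s))$ is exactly the Meyer--Yoeurp decomposition of the scalar martingale $N(\cdot,\cdot,s)$. In particular these two scalar martingales are orthogonal, so $[N]_t=[N^c]_t+[N^d]_t$ for a.e.\ $s$, whence $\|[N]_\infty^{1/2}\|_{L^p(\O;X)}$ is comparable to the sum of the analogous quantities for $M^c$ and $M^d$; combining \eqref{eq:MYdecLpineq} with Theorem~\ref{thm:DoobLp} and scalar Doob, the middle term of \eqref{eq:BDG2} splits the same way. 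Hence it suffices to prove the second equivalence separately when $M$ is continuous and when $M$ is purely discontinuous.

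\emph{Sampling; the easy inequality.} In either case put $t_j=jT/2^n$ and $d_j^{(n)}=N(t_j,\cdot,\cdot)-N(t_{j-1},\cdot,\cdot)$, a martingale difference sequence in $L^p(\O;X)$; by \eqref{eq:BurkholderX},
\[
\sup_{t}\|M(t)\|_{L^p(\O;X)}=\|M(T)\|_{L^p(\O;X)}\eqsim_{p,X}\Big\|\Big(\sum_{j=1}^{2^n}|d_j^{(n)}|^2\Big)^{1/2}\Big\|_{L^p(\O;X)}
\]
for every $n$. For each fixed $s$ the Riemann sums $\sum_j|d_j^{(n)}(\cdot,s)|^2$ converge a.s.\ (along the dyadic partitions) to $[N(\cdot,s)]_T$; by Fubini this holds for a.e.\ $(\om,s)$, and the Fatou property of $X$ (Definition~\ref{def:Bfs}$(iv)$) and of $L^p(\O)$ then give $\|[N]_\infty^{1/2}\|_{L^p(\O;X)}\lesssim_{p,X}\sup_t\|M(t)\|_{L^p(\O;X)}$, i.e.\ one of the two inequalities of the second equivalence.

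\emph{The reverse inequality --- the main obstacle.} The remaining inequality $\sup_t\|M(t)\|_{L^p(\O;X)}\lesssim_{p,X}\|[N]_\infty^{1/2}\|_{L^p(\O;X)}$ requires upgrading the above convergence to convergence of the $L^p(\O;X)$‑norms of the discrete square functions, which is the delicate point: in a Banach function space a.e.\ convergence together with norm boundedness does \emph{not} force norm convergence. For the continuous case I would first reduce to $M(T)\in L^\infty(\O;X)$, so that $M^c$ is an $L^q$‑martingale for every $q$ and \eqref{eq:BurkholderX} with an exponent $q>p$ supplies the extra integrability needed to pass to the limit, and then return to general $M(T)$ by approximation, using Theorem~\ref{thm:DoobLp}, \eqref{eq:MYdecLpineq} and the reverse triangle inequality $|[A]_T^{1/2}-[B]_T^{1/2}|\le[A-B]_T^{1/2}$ to identify the limit. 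The purely discontinuous case is where I expect the real work to lie: the sampled sums are not monotone, a single jump of $N^d$ can be split across two subintervals of the partition or two jumps lumped into one, so one must separate the jumps of $M^d$ into finitely many ``large'' ones — resolved exactly by a sufficiently fine partition, using $[N^d]_t=\sum_{u\le t}|\Delta N^d_u|^2$ a.e.\ in $s$ and the matching of the jumps of $N^d$ and $M^d$ from Remark~\ref{rem:DeltaMtau(s)=DeltaN(s)tau} — and a ``small'' remainder of $X$‑small quadratic variation, and then carry out the resulting double limit uniformly in the lattice norm $\|\cdot\|_{L^p(\O;X)}$. Throughout, the three UMD inputs — boundedness of the lattice maximal function, the Meyer--Yoeurp decomposition, and \eqref{eq:BurkholderX} — are used only as black boxes.
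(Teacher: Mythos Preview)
Your reductions and the ``easy inequality'' via Fatou are essentially fine (modulo passing to a subsequence so that the Riemann sums converge a.e.\ in $(\omega,s)$ rather than merely in probability), but the reverse inequality --- the heart of the second equivalence --- is left genuinely incomplete. You flag the purely discontinuous case yourself as ``where the real work lies'' and offer only an outline; the large/small jump separation you sketch does not obviously yield a bound in the \emph{lattice} norm $L^p(\Omega;X)$, since controlling the jumps of $M^d$ in $\|\cdot\|_X$ says nothing about the jumps of $N^d(\cdot,\cdot,s)$ uniformly in $s$. The continuous case is also shakier than it looks: boundedness of the discrete square functions in $L^q(\Omega;X)$ for some $q>p$ together with a.e.\ convergence in $(\omega,s)$ does \emph{not} in general imply convergence in $L^p(\Omega;X)$, because pointwise convergence in $S$ need not force $\|\cdot\|_X$-convergence (you have no pointwise dominant in $X$, and uniform integrability in $\Omega$ alone is not enough).

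The paper sidesteps both difficulties by a different, cleaner route that renders your Meyer--Yoeurp split unnecessary for this theorem. First prove the second equivalence for martingales taking values in a \emph{finite-dimensional} subspace $Y\subseteq X$: write $N(t,s)=\sum_{j=1}^n M_j(t)x_j(s)$ with scalar martingales $M_j$, and invoke Dol\'eans' theorem (Lemma~\ref{lem:Doleans}), which gives $L^{p/2}$-convergence of the Riemann sums $\sum_i|M_j(t_i)-M_j(t_{i-1})|^2$ to $[M_j]_T$ --- not merely a.s.\ convergence. Combined with the trivial norm equivalence $\|\sum_j c_jx_j\|_{L^p(\Omega;X)}\eqsim_{Y}\sum_j\|c_j\|_{L^p(\Omega)}$, this upgrades the right-hand side of \eqref{eq:BurkholderX} to $\|[N]_T^{1/2}\|_{L^p(\Omega;X)}$ in the limit, giving both inequalities at once. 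Second, approximate a general $M(T)\in L^p(\Omega;X)$ by simple functions; the finite-dimensional case shows the associated martingale fields form a Cauchy sequence in the space $\MQ^p(X)$, whose completeness (Proposition~\ref{prop:completeMQ}) supplies the limit and its identification with $N$. No case split between continuous and purely discontinuous martingales is needed, and no uniform-in-$s$ jump analysis enters.
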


To prove Theorem \ref{thm:main} we first prove a completeness result.
\begin{proposition}\label{prop:completeMQ}
Let $X$ be a Banach function space over a $\sigma$-finite measure space $S$, $1\leq p<\infty$. Let
\begin{multline*}
 \MQ^p(X) := \{N:\R_+\times \Omega\times S\to\R: N \ \text{is a martingale field,}\\
 N(0, \cdot,s) = 0\; \forall s\in S, \  \text{and} \ \|N\|_{\MQ^p(X)} <\infty\},
\end{multline*}
where $\|N\|_{\MQ^p(X)} := \|[N]_{\infty}^{1/2}\|_{L^p(\Omega;X)}$. Then $(\MQ^p(X), \|\cdot\|_{\MQ^p(X)})$ is a Banach space. Moreover, if $N_n\to N$ in $\MQ^p$, then there exists a subsequence $(N_{n_k})_{k\geq 1}$ such that pointwise a.e.\ in $S$, we have $N_{n_k}\to N$ in $L^1(\Omega;\mathcal D_b([0,\infty)))$.
\end{proposition}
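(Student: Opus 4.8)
The plan is to mimic the standard argument that $L^p$-Bochner spaces are complete, but carried out through the norm $\|N\|_{\MQ^p(X)} = \|[N]_\infty^{1/2}\|_{L^p(\Omega;X)}$ rather than a genuine Bochner norm, and to extract pathwise information from the quadratic-variation control. First I would check that $\|\cdot\|_{\MQ^p(X)}$ is indeed a norm on the stated space: homogeneity and the triangle inequality follow from the fact that $([M+N]_\infty^{1/2})$ obeys the triangle inequality pointwise in $(\omega,s)$ (since $[\cdot]^{1/2}$ is a seminorm on martingales sharing the same filtration, because $[M+N] = [M] + 2[M,N] + [N]$ and $|[M,N]| \le [M]^{1/2}[N]^{1/2}$ by Kunita--Watanabe), combined with property (ii) and (iii) of Definition \ref{def:Bfs} for the lattice norm on $X$; definiteness uses that $[N]_\infty = 0$ a.s.\ forces $N$ to be a.s.\ constant, hence $\equiv 0$ since $N(0,\cdot,s)=0$, together with Definition \ref{def:Bfs}(i).

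The core is completeness. I would use the classical criterion: it suffices to show that every absolutely convergent series converges. So take $(N_n)_{n\ge1}$ with $\sum_n \|N_n\|_{\MQ^p(X)} < \infty$ and set, formally, $N := \sum_n N_n$; the goal is to make sense of this and show convergence in $\MQ^p(X)$. The key estimate is the scalar BDG inequality \eqref{eq:Mcont} applied pathwise in $s$: for each fixed $s$ and each partial sum $S_K(\cdot,\cdot,s) = \sum_{n=1}^K N_n(\cdot,\cdot,s)$, which is a genuine real martingale, Doob/BDG gives
\[
\big\|\sup_{t\ge 0}|S_K(t,\cdot,s) - S_J(t,\cdot,s)|\big\|_{L^1(\Omega)} \lesssim \big\|[S_K - S_J]_\infty^{1/2}(\cdot,s)\big\|_{L^1(\Omega)}.
\]
Applying the $X$-norm in $s$ and then using the triangle inequality in $\MQ^p(X)$ together with $p\ge 1$ (so that $\|\cdot\|_{L^1(\Omega)} \le \|\cdot\|_{L^p(\Omega)}$ after we also bring in $\mu$ via the lattice structure — here one has to be a little careful and instead estimate $\big\|\,\|\sup_t|S_K - S_J|\,\|_X\big\|_{L^p(\Omega)}$ directly using the scalar maximal inequality inside the $L^p(\Omega;X)$ norm, exactly as in Lemma~\ref{lem:estUMDrubio} but with the elementary real-valued maximal inequality since no UMD is needed for an upper bound) shows that the tails $\sum_{n>J}\|N_n\|_{\MQ^p(X)}$ control $\|S_K - S_J\|_{\MQ^p(X)}$ and also control the $X(\mathcal D_b)$-valued $L^p(\Omega)$-norm of $S_K - S_J$. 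Hence $(S_K)$ is Cauchy both in $\MQ^p(X)$ and in $L^p(\Omega;X(\mathcal D_b([0,\infty))))$; call the respective limits $N$ and $\widetilde N$. One then identifies $\widetilde N$ with a martingale field representing $N$, checks $N(0,\cdot,s)=0$ and the martingale property of $N(\cdot,\cdot,s)$ survives the $L^p$-limit (conditional expectations are $L^p$-contractions), and concludes $N \in \MQ^p(X)$ with $S_K\to N$. The ``moreover'' part then follows: convergence in $\MQ^p(X)$ of a sequence gives, via the same pathwise BDG estimate, convergence of $\|N_n - N\|$ in the $X(L^1(\Omega;\mathcal D_b))$-type quantity, and a function converging in a lattice norm on $X$ has a subsequence converging pointwise a.e.\ in $S$ — this is exactly Remark~\ref{rem:continmeasure}, applied to the Banach function space $X$ with scalar field replaced by $L^1(\Omega;\mathcal D_b([0,\infty)))$, which is legitimate since Remark~\ref{rem:continmeasure}'s argument only used the lattice axioms and Exercise 64.1 of Zaanen.

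The main obstacle I anticipate is bookkeeping around the mixed nature of the norm: $\MQ^p(X)$ is defined through $[N]_\infty$, which is not linear in $N$, so ``absolute convergence of the defining series'' has to be routed through the genuine norm estimates on the partial sums via the pathwise scalar BDG inequality, and one must be sure the implied constants in those inequalities are absolute (independent of $s$, $K$, $J$) — which they are, since \eqref{eq:Mcont} for $p=1$ has a universal constant. A secondary subtlety is measurability: having constructed $\widetilde N$ as an $L^p(\Omega;X(\mathcal D_b))$-limit, one needs that it admits a jointly measurable representative satisfying the $\mathcal B([0,t])\otimes\mathcal F_t\otimes\Sigma$-measurability required of a martingale field; this is handled exactly as in the proof of Theorem~\ref{thm:DoobLp} (the $N_n$ are martingale fields, the limit is taken in a space of $\mathcal D_b$-valued functions, and one passes to an a.e.-convergent subsequence in $S$). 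Finally, one should note that only the \emph{upper} (easy) direction of the maximal inequality is needed here — no UMD, no lattice maximal function — so the completeness result holds for arbitrary Banach function spaces $X$, consistent with the hypotheses as stated.
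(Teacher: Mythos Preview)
Your outline matches the paper's at the start (Kunita--Watanabe for the triangle inequality, absolutely-convergent-series criterion for completeness) and your handling of the ``moreover'' clause is essentially the paper's argument. But the core completeness step contains a real gap.

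You claim that $(S_K)$ is Cauchy in $L^p(\Omega;X(\mathcal D_b([0,\infty))))$, i.e.\ that
\[
\big\|\,\sup_{t\ge 0}|S_K-S_J|\,\big\|_{L^p(\Omega;X)}\ \lesssim\ \|S_K-S_J\|_{\MQ^p(X)}.
\]
This is precisely one half of the Burkholder--Davis--Gundy inequality for Banach function spaces (Theorem~\ref{thm:main}), which needs UMD and is in fact proved \emph{using} Proposition~\ref{prop:completeMQ}; so invoking it here would be circular even if $X$ were UMD, and the proposition is stated for arbitrary Banach function spaces. Your justification (``the scalar maximal inequality inside the $L^p(\Omega;X)$ norm, exactly as in Lemma~\ref{lem:estUMDrubio}\dots no UMD is needed for an upper bound'') does not work: the scalar BDG inequality applied at a fixed $s$ is an inequality between $L^p(\Omega)$-norms, not a pointwise-in-$\omega$ inequality, so it cannot be pushed through the $X$-norm sitting inside the $L^p(\Omega)$-norm. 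And Lemma~\ref{lem:estUMDrubio} itself \emph{is} the lattice maximal inequality, which genuinely requires UMD.

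The paper avoids this by never asserting convergence in $L^p(\Omega;X(\mathcal D_b))$. Instead it uses exactly the estimate you deploy for the ``moreover'' part, but up front: by Jensen, $\big\|\sum_{k}\E[N_k]_\infty^{1/2}\big\|_X\le \sum_k\|N_k\|_{\MQ^p(X)}<\infty$, hence $\sum_k\E[N_k(\cdot,s)]_\infty^{1/2}<\infty$ for a.e.\ $s$, and by scalar $L^1$-BDG the series $\sum_k N_k(\cdot,s)$ converges in $L^1(\Omega;\mathcal D_b)$ for a.e.\ $s$. This pointwise-in-$s$ limit defines the martingale field $N$. Convergence $S_K\to N$ in $\MQ^p(X)$ then follows from the pathwise bound $\big[\sum_{k>n}N_k(\cdot,s)\big]_\infty^{1/2}\le\sum_{k>n}[N_k(\cdot,s)]_\infty^{1/2}$ together with Fatou/monotone convergence in $L^p(\Omega;X)$. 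Your argument is repairable along exactly these lines---indeed, just transplant your own ``moreover'' reasoning (which correctly lands in $X(L^1(\Omega;\mathcal D_b))$, not $L^p(\Omega;X(\mathcal D_b))$) into the construction of the limit.
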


\begin{proof}
Let us first check that $\MQ^p(X)$ is a normed vector space. For this only the triangle inequality requires some comments. By the well-known estimate for local martingales $M, N$ (see \cite[Theorem 26.6(iii)]{Kal}) we have that a.s.
\begin{equation}\label{eq:triangleineqsqrt[M]}
 \begin{split}
  [M+N]_{t} &= [M]_t+2[M,N]_{t} +[N]_t\\
  &\leq [M]_t+2[M]^{1/2}_t[N]_{t}^{1/2} +[N]_t = \big([M]_t^{1/2}+[N]_{t}^{1/2}\big)^2,
 \end{split}
\end{equation}
Therefore, $[M+N]_{t}^{1/2} \leq [M]^{1/2}_t+[N]_{t}^{1/2}$ a.s.\ for all $t\in [0,\infty]$.

Let $(N_k)_{k\geq 1}$ be such that $\sum_{k\geq 1}
\|N_k\|_{\MQ^p(X)}<\infty$. It suffices to show that $\sum_{k\geq
1} N_k$ converges in $\MQ^p(X)$. Observe that by monotone
convergence in $\Omega$ and Jensen's inequality applied to
$\|\cdot\|_X$ for any $n > m\geq 1$ we have
\begin{equation}\label{eq:convLpxXpoint}
\begin{aligned}
\Big\|\sum_{k = m+1}^n \E [N_k]_{\infty}^{1/2}\Big\|_{X}
& = \Big\|\sum_{k = 1}^n \E [N_k]_{\infty}^{1/2} - \sum_{k = 1}^m \E [N_k]_{\infty}^{1/2}\Big\|_{X}\\
&=\Big\|\E \sum_{k = m+1}^n [N_k]_{\infty}^{1/2}\Big\|_{X} \leq
\E\Big\| \sum_{k = m+1}^n [N_k]_{\infty}^{1/2}\Big\|_{X}
\\
& =\Big\| \sum_{k = m+1}^n [N_k]_{\infty}^{1/2}\Big\|_{L^1(\Omega;X)} \leq \Big\| \sum_{k = m+1}^n [N_k]_{\infty}^{1/2}\Big\|_{L^p(\Omega;X)}\\
& \leq \sum_{k = m+1}^n\Big\|
[N_k]_{\infty}^{1/2}\Big\|_{L^p(\Omega;X)} \to 0,\;\; m,n\to
\infty,
\end{aligned}
\end{equation}
where the latter holds due to the fact that $\sum_{k \geq 1}\Big\|
[N_k]_{\infty}^{1/2}\Big\|_{L^p(\Omega;X)} < \infty$. Thus
$\sum_{k = 1}^n \E [N_k]_{\infty}^{1/2}$ converges in $X$ as $n\to
\infty$, where the corresponding limit coincides with its
pointwise limit $\sum_{k \geq 1} \E [N_k]_{\infty}^{1/2}$  by
Remark \ref{rem:continmeasure}. Therefore, since any element of
$X$ is finite a.s.\  by Definition \ref{def:Bfs}, we can find
$S_0\in \Sigma$ such that $\mu(S_0^{c}) = 0$ and pointwise in
$S_0$, we have $\sum_{k\geq 1} \E [N_k]_{\infty}^{1/2}<\infty$. 
Fix $s\in S_0$. In particular, we find that $\sum_{k\geq 1}
[N_k]_{\infty}^{1/2}$ converges in $L^1(\Omega)$. Moreover, since
by the scalar Burkholder-Davis-Gundy inequalities $\E\sup_{t\geq
0} |N_k(t,\cdot,s)| \eqsim \E[N_k(s)]_{\infty}^{1/2}$, we also
obtain that \begin{equation}\label{eq:NcontL1} N(\cdot,
s):=\sum_{k\geq 1} N_k(\cdot, s) \ \ \text{converges in} \
L^1(\Omega;\mathcal D_b([0,\infty)).
\end{equation}
Let $N(\cdot, s) = 0$ for $s\notin S_0$. Then $N$ defines a martingale field.
Moreover, by the scalar Burkholder-Davis-Gundy inequalities
\[
\lim_{m\to \infty} \Big[\sum_{k=n}^m N_k(\cdot,s)\Big]_{\infty}^{1/2} =\Big[\sum_{k=n}^\infty N_k(\cdot, s)\Big]_{\infty}^{1/2}
\]
in $L^1(\Omega)$. Therefore, by considering an a.s.\ convergent subsequence and by \eqref{eq:triangleineqsqrt[M]} we obtain
\begin{equation}\label{eq:convineqabs}
\Big[\sum_{k=n}^\infty N_k(\cdot, s)\Big]_{\infty}^{1/2}\leq \sum_{k=n}^\infty [N_k(\cdot,s)]_{\infty}^{1/2}.
\end{equation}

It remains to prove that $N\in \MQ^p(X)$ and $N = \sum_{k\geq 1} N_k$ with convergence in $\MQ^p(X)$.
Let $\varepsilon>0$. Choose $n\in \N$ such that $\sum_{k\geq n+1} \|N_k\|_{\MQ^p(X)}<\varepsilon$. It follows from \eqref{eq:convLpxXpoint} that $\E\big\| \sum_{k\geq 1} [N_k]_{\infty}^{1/2}\big\|_{X}<\infty$,
so $\sum_{k\geq 1} [N_k]_{\infty}^{1/2}$ a.s.\ converges in $X$.
Now by \eqref{eq:convineqabs}, the triangle inequality and Fatou's lemma, we obtain
\begin{align*}
\Big\|\Big[\sum_{k\geq n+1} N_k \Big]_{\infty}^{1/2}\Big\|_{L^p(\Omega; X)}
& \leq \Big\|\sum_{k=n+1}^\infty [ N_k ]_{\infty}^{1/2}\Big\|_{L^p(\Omega; X)}
\\
 & \leq \sum_{k=n+1}^\infty \Big\|[ N_k ]_{\infty}^{1/2}\Big\|_{L^p(\Omega; X)}
\\ & \leq \liminf_{m\to \infty}  \sum_{k=n+1}^m\Big\|[ N_k]_{\infty}^{1/2} \Big\|_{L^p(\Omega; X)}<\varepsilon^p.
\end{align*}
Therefore, $N\in \MQ^p(X)$ and $\| N - \sum_{k=1}^n N_k \|_{\MQ^p(X)} <\varepsilon$.

For the proof of the final assertion assume that $N_n\to N$ in $\MQ^p(X)$. Choose a subsequence $(N_{n_k})_{k\geq 1}$ such that $\|N_{n_k}- N\|_{\MQ^p(X)}\leq 2^{-k}$. Then $\sum_{k\geq 1}\|N_{n_k}- N\|_{\MQ^p(X)}<\infty$ and hence by \eqref{eq:NcontL1} we see that pointwise a.e.\ in $S$, the series $\sum_{k\geq 1} (N_{n_k}- N)$ converges in $L^1(\Omega;\mathcal D_b([0,\infty)))$. Therefore, $N_{n_k}\to N$ in $L^1(\Omega;\mathcal D_b([0,\infty);X))$ as required.
\end{proof}

For the proof of Theorem \ref{thm:main} we will need the following lemma presented in \cite[Th\'eor\`eme 2]{Dol69}.

\begin{lemma}\label{lem:Doleans}
 Let $1<p<\infty$, $M:\mathbb R_+\times\Omega \to \mathbb R$ be an $L^p$-martingales. Let $T> 0$. For each $n\geq 1$ define
 \[
  R_n := \sum_{k=1}^n \bigl|M_{\frac{Tk}{n}} - M_{\frac{T(k-1)}{n}}\bigr|^2.
 \]
Then $R_n$ converges to $[M]_T$ in $L^{p/2}$.
\end{lemma}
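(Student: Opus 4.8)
The plan is to rewrite the difference $R_n - [M]_T$ as a single stochastic integral against $M$ and then apply the scalar Burkholder--Davis--Gundy inequality. Fix $n\geq 1$, put $t_k := Tk/n$, and let $H^n := \sum_{k=1}^n M_{t_{k-1}}\one_{(t_{k-1},t_k]}$, which is an elementary predictable process. Telescoping $M_T^2 - M_0^2 = \sum_{k=1}^n (M_{t_k}^2 - M_{t_{k-1}}^2)$ and using the identity $M_{t_k}^2 - M_{t_{k-1}}^2 = (M_{t_k}-M_{t_{k-1}})^2 + 2M_{t_{k-1}}(M_{t_k}-M_{t_{k-1}})$ gives
\[
 R_n = M_T^2 - M_0^2 - 2\int_0^T H^n_s\ud M_s,
\]
while the It\^o formula for $M^2$ (integration by parts) gives $[M]_T = M_T^2 - M_0^2 - 2\int_0^T M_{s-}\ud M_s$. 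Subtracting,
\[
 R_n - [M]_T = 2\int_0^T (M_{s-} - H^n_s)\ud M_s.
\]

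Next I would apply the scalar Burkholder--Davis--Gundy inequality with exponent $p/2$ to the local martingale $t\mapsto \int_0^t (M_{s-}-H^n_s)\ud M_s$, whose quadratic variation at time $T$ equals $\int_0^T |M_{s-} - H^n_s|^2\ud[M]_s$. This yields
\[
 \E|R_n - [M]_T|^{p/2} \lesssim_p \E\Big(\int_0^T |M_{s-}-H^n_s|^2\ud[M]_s\Big)^{p/4},
\]
(in particular each $R_n\in L^{p/2}$), so it remains to show the right-hand side tends to $0$. For a.e.\ $\omega$ the path $s\mapsto M_s(\omega)$ is c\`adl\`ag and $[M]_T(\omega)<\infty$; for each $s\in(0,T]$, if $s\in(t_{k-1},t_k]$ then $H^n_s(\omega) = M_{t_{k-1}}(\omega)\to M_{s-}(\omega)$ as $n\to\infty$ (since the left endpoint $t_{k-1}$ increases to $s$), and $|M_{s-}(\omega)-H^n_s(\omega)|^2 \leq 4\sup_{t\in[0,T]}|M_t(\omega)|^2$. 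Hence by dominated convergence for the finite measure $\ud[M]_s(\omega)$ one gets $\int_0^T |M_{s-}-H^n_s|^2\ud[M]_s\to 0$ a.s.; moreover this quantity is dominated by $Z := 4\sup_{t\in[0,T]}|M_t|^2\,[M]_T$, and
\[
 \E Z^{p/4} \leq 4^{p/4}\big(\E\sup_{t\in[0,T]}|M_t|^p\big)^{1/2}\big(\E[M]_T^{p/2}\big)^{1/2} < \infty
\]
by Cauchy--Schwarz together with Doob's maximal inequality (here the hypothesis $p>1$ is used) and the scalar Burkholder--Davis--Gundy inequality, since $M$ is an $L^p$-martingale. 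A final application of dominated convergence on $\Omega$ gives $\E\big(\int_0^T |M_{s-}-H^n_s|^2\ud[M]_s\big)^{p/4}\to 0$, and therefore $R_n\to[M]_T$ in $L^{p/2}$.

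The only points needing care are the bookkeeping identities of the first paragraph: that the Riemann-type sum $\sum_{k}M_{t_{k-1}}(M_{t_k}-M_{t_{k-1}})$ coincides with the stochastic integral $\int_0^T H^n_s\ud M_s$ of the elementary integrand $H^n$, and the standard It\^o-formula representation of $[M]_T$; these are routine but should be invoked cleanly. Everything after that is just the two-sided scalar BDG inequality (valid for exponent $p/2\in(1/2,\infty)$) and two rounds of dominated convergence. Since the statement is classical, one may alternatively simply cite \cite{Dol69}.
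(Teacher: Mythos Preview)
The paper does not actually prove this lemma: it simply records it as ``the following lemma presented in \cite[Th\'eor\`eme 2]{Dol69}'' and moves on. Your proposal supplies a genuine proof rather than a citation, and the argument is correct.

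A couple of minor remarks. First, the phrase ``the left endpoint $t_{k-1}$ increases to $s$'' is slightly loose, since as $n$ varies the relevant $t_{k-1}$ need not be monotone; what you use (and what holds) is just $t_{k-1}\uparrow s$ along the given $n$'s in the sense $t_{k-1}<s$ and $s-t_{k-1}\leq T/n\to 0$, which together with c\`adl\`ag paths gives $M_{t_{k-1}}\to M_{s-}$. Second, for $1<p<2$ you invoke BDG at exponent $p/2\in(1/2,1)$; this is indeed valid for general c\`adl\`ag local martingales (see e.g.\ Lenglart--L\'epingle--Pratelli or Dellacherie--Meyer), but since the paper only cites the $p\geq 1$ version from \cite{Kal} you might want to add an explicit reference for the small-exponent case. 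Your own closing remark that one may alternatively just cite \cite{Dol69} is exactly what the paper does.
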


\begin{proof}[Proof of Theorem \ref{thm:main}]

The existence of the local martingale field $N$ together with the first estimate in \eqref{eq:BDG2} follows from Theorem \ref{thm:DoobLp}. It remains to prove
\begin{equation}\label{eq:normequivMN}
\big\|\sup_{t\geq 0}\|M(t,\cdot)\|_X \big\|_{L^p(\Omega)} \eqsim_{p,X} \|[N]_{\infty}^{1/2}\|_{L^p(\Omega;X)}.
\end{equation}
Due to Definition \ref{def:Bfs}$(iv)$ it suffices to prove the
above norm equivalence in the case $M$ and $N$ becomes constant
after some fixed time $T$.

{\em Step 1: The finite dimensional case.}
Assume that $M$ takes values in a finite dimensional subspace $Y$ of $X$ and that the right hand side of \eqref{eq:normequivMN} is finite. Then we can write $N(t,s) = M(t)(s) = \sum_{j=1}^n M_j(t) x_j(s)$, where each $M_j$ is a scalar-valued martingale with $M_j(T)\in L^p(\Omega)$ and $x_1,\ldots,x_n\in X$ form a basis of $Y$. Note that for any $c_1,\ldots,c_n\in L^p(\Omega)$ we have that
\begin{equation}\label{eq:Yfdmart}
 \Bigl\|\sum_{j=1}^n c_jx_j\Bigr\|_{L^p(\Omega; X)} \eqsim_{p,Y} \sum_{j=1}^n \|c_j\|_{L^p(\Omega)}.
\end{equation}
Fix $m\geq 1$. Then by \eqref{eq:BurkholderX} and Doob's maximal inequality
\begin{equation}\label{eq:discretetocontimeY}
\begin{split}
  \big\|\sup_{t\geq 0}\|M(t,\cdot)\|_X \big\|_{L^p(\Omega)} &\eqsim_{p} \|M(T,\cdot)\|_{L^p(\Omega;X)}\\
  &= \Bigl\|\sum_{i=1}^m M_{\frac{Ti}{m}} - M_{\frac{T(i-1)}{m}}\Bigr\|_{L^p(\Omega; X)}\\
  &\eqsim_{p,X} \Bigl\|\Bigl(\sum_{i=1}^m\bigl|M_{\frac{Ti}{m}} - M_{\frac{T(i-1)}{m}}\bigr|^2\Bigr)^{\frac{1}{2}}\Bigr\|_{L^p(\Omega; X)},
\end{split}
\end{equation}
and by \eqref{eq:Yfdmart} and Lemma \ref{lem:Doleans} the right hand side of \eqref{eq:discretetocontimeY} converges to
$$
\|[M]_{\infty}^{1/2}\|_{L^p(\Omega;X)}=\|[N]_{\infty}^{1/2}\|_{L^p(\Omega;X)}.
$$

{\em Step 2: Reduction to the case where $M$ takes values in a finite dimensional subspace of $X$.}
Let $M(T)\in L^p(\Omega;X)$. Then we can find simple functions $(\xi_n)_{n\geq 1}$ in $L^p(\Omega;X)$ such that $\xi_n\to M(T)$. Let $M_n(t) = \E(\xi_n|\mathcal{F}_t)$ for all $t\geq 0$ and $n\geq 1$, $(N_n)_{n\geq 1}$ be the corresponding martingale fields. Then each $M_n$ takes values in a finite dimensional subspace $X_n\subseteq X$, and hence by Step $1$
\[
 \big\|\sup_{t\geq 0}\|M_n(t,\cdot)-M_m(t,\cdot)\|_X \big\|_{L^p(\Omega)} \eqsim_{p,X} \|[N_n-N_m]_{\infty}^{1/2}\|_{L^p(\Omega;X)}
\]
for any $m,n\geq 1$. Therefore since $(\xi_n)_{n\geq 1}$ is Cauchy in $L^p(\Omega;X)$, $(N_n)_{n\geq 1}$ converges to some $N$ in $\MQ^p(X)$ by the first part of Proposition \ref{prop:completeMQ}.

Let us show that $N$ is the desired local martingale field. Fix $t\geq 0$. We need to show that $N(\cdot,t,\cdot) = M_t$ a.s.\ on $\Omega$. First notice that by the second part of Proposition \ref{prop:completeMQ} there exists a subsequence of $(N_n)_{n\geq 1}$ which we will denote by $(N_n)_{n\geq 1}$ as well such that $N_n(\cdot, t, \sigma) \to N(\cdot, t, \sigma)$ in $L^1(\Omega)$ for a.e.\ $\sigma\in S$. On the other hand by Jensen's inequality
\[
 \bigl\|\mathbb E |N_n(\cdot,t,\cdot) - M_t|\bigr\|_X =  \bigl\|\mathbb E |M_n(t) - M(t)|\bigr\|_X \leq \mathbb E \|M_n(t)- M(t)\|_X \to 0,\;\;\;\; n\to \infty.
\]
Hence $N_n(\cdot,t,\cdot)\to M_t$ in $X(L^1(\Omega))$, and thus by
Remark \ref{rem:continmeasure} in $L^0(S;L^1(\Omega))$. Therefore
we can find a subsequence of $(N_n)_{n\geq 1}$ (which we will
again denote by $(N_n)_{n\geq 1}$) such that
$N_n(\cdot,t,\sigma)\to M_t(\sigma)$ in $L^1(\Omega)$ for a.e.\
$\sigma\in S$ (here we use the fact that $\mu$ is
$\sigma$-finite), so $N(\cdot, t, \cdot) = M_t$ a.s.\ on $\Omega
\times S$, and consequently by Definition \ref{def:Bfs}$(iii)$,
$N(\omega, t, \cdot) = M_t(\omega)$ for a.a.\ $\omega \in \Omega$.
Thus \eqref{eq:normequivMN} follows by letting $n\to \infty$.

{\em Step 3: Reduction to the case where the left-hand side of
\eqref{eq:normequivMN} is finite.} Assume that the left-hand side
of \eqref{eq:normequivMN} is infinite, but the right-hand side is
finite. Since $M$ is a local $L^p$-martingale we can find a
sequence of stopping times $(\tau_n)_{n\geq 1}$ such that
$\tau_n\uparrow \infty$ and
$\|M^{\tau_n}_T\|_{L^p(\Omega;X)}<\infty$ for each $n\geq 1$. By
the monotone convergence theorem and Definition
\ref{def:Bfs}$(iv)$
\begin{align*}
 \|[N]_{\infty}^{1/2}\|_{L^p(\Omega;X)} &= \lim_{n\to \infty}\|[N^{\tau_n}]_{\infty}^{1/2}\|_{L^p(\Omega;X)} \eqsim_{p,X} \limsup_{n\to \infty}\|M^{\tau_n}_T\|_{L^p(\Omega;X)}\\
 & = \lim_{n\to \infty}\|M^{\tau_n}_T\|_{L^p(\Omega;X)} = \lim_{n\to \infty}\Bigl\|\sup_{0\leq t\leq T}\|M^{\tau_n}_t\|_X\Bigr\|_{L^p(\Omega)}\\
 &= \Bigl\|\sup_{0\leq t\leq T}\|M_t\|_X\Bigr\|_{L^p(\Omega)}=\infty
\end{align*}
and hence the right-hand side of \eqref{eq:normequivMN} is infinite as well.
\end{proof}

We use an extrapolation argument to extend part of Theorem \ref{thm:main} to $p\in (0,1]$ in the continuous-path case.

\begin{corollary}\label{cor:cont0<pleq1}
Let $X$ be a UMD Banach function space over a $\sigma$-finite measure space and let $p\in (0, \infty)$. Let $M$ be a continuous local martingale $M:\mathbb R_+\times\Omega\to X$ with $M(0,\cdot) = 0$. Then there exists a continuous local martingale field $N:\mathbb R_+\times \Omega\times S\to \R$ such that for a.a.\ $\omega\in \O$, all $t\geq 0$, and a.a.\ $s\in  S$, $N(t,\omega,\cdot) = M(t,\omega)(s)$ and
\begin{equation}\label{eq:normestNM0<p<infty}
\big\|\sup_{t\geq 0} \|M(t,\cdot)\|_X \big\|_{L^p(\Omega)} \eqsim_{p,X} \big\|[N]_{\infty}^{1/2}\big\|_{L^p(\Omega;X)} .
\end{equation}
\end{corollary}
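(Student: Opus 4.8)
The plan is to derive the $p \in (0,1]$ case from the already-established $p \in (1,\infty)$ case (Theorem~\ref{thm:main}) via a Lenglart-type domination argument. The key observation is that, once we fix a version $N$ of $M$ that is a continuous martingale field (whose existence follows from the continuity clause of Theorem~\ref{thm:DoobLp}), the two scalar processes $t \mapsto \|M(t,\cdot)\|_X = \sup_{s}|N(t,\cdot,s)|$ \emph{(interpreted suitably, or rather its running supremum)} and $t \mapsto \|[N]_t^{1/2}\|_X$ are adapted, nonnegative, nondecreasing in the first case and the latter also nondecreasing, and by Theorem~\ref{thm:main} applied with exponent $p = 2$ (or any fixed $p_0 \in (1,\infty)$) and to the stopped martingales $M^\tau$, they dominate each other in the sense required by Lenglart's inequality. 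Concretely, I would first record that for every bounded stopping time $\tau$,
\[
\big\| \|M(\tau,\cdot)\|_X \big\|_{L^{p_0}(\Omega)} \eqsim_{p_0,X} \big\| [N^\tau]_\infty^{1/2} \big\|_{L^{p_0}(\Omega;X)},
\]
which is exactly \eqref{eq:normequivMN} applied to the stopped local martingale $M^\tau$ together with the fact that $[N^\tau]_\infty = [N]_\tau$.

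\emph{Step 1: Set-up of the two processes.} Fix the continuous martingale field $N$ from Theorem~\ref{thm:DoobLp}. Define $Y_t := \big\| \sup_{0 \le u \le t} |N(u,\cdot,\cdot)| \big\|_X$ and $Z_t := \| [N]_t^{1/2} \|_X$; both are adapted, continuous (using continuity of $N$ and of $[N]$, and Definition~\ref{def:Bfs}(iv) for continuity of the $X$-norm along monotone limits), nondecreasing, and start at $0$.

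\emph{Step 2: The Lenglart domination.} I would show that $Y$ is \emph{Lenglart-dominated} by $Z$ and vice versa: for every bounded stopping time $\tau$, $\E Y_\tau \lesssim_X \E Z_\tau$ and $\E Z_\tau \lesssim_X \E Y_\tau$. The first inequality is the hard direction to phrase correctly, because $Y_\tau$ involves the running supremum up to $\tau$ and not just $\|M(\tau,\cdot)\|_X$; but by optional stopping the process $u \mapsto N(u \wedge \tau, \cdot, \cdot)$ is again a continuous martingale field with quadratic variation $[N]_{\cdot \wedge \tau}$, so applying the $p_0 = 1$-type bound is not available — instead I apply Theorem~\ref{thm:main} with $p_0 = 2$, obtaining $\E Y_\tau^2 \eqsim_X \E Z_\tau^2$, and then use that $\E Y_\tau \le (\E Y_\tau^2)^{1/2}$ together with the reverse relation; more cleanly, Lenglart's inequality only requires control of $\E (Y_\tau \wedge 1)$ or the ``$\E Y_\tau \le C\, \E Z_\tau$ for $Z$ nondecreasing'' formulation, which does follow from the $L^1$-endpoint of the scalar BDG applied pointwise in $s$ combined with Fatou/Fubini exactly as in the proof of Proposition~\ref{prop:completeMQ}. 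This interplay — getting an honest $L^1$ domination of the running-sup process by the quadratic variation, uniformly over stopping times — is where I expect the main technical care to be needed.

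\emph{Step 3: Invoke Lenglart and conclude.} Once the two-sided domination is in place, the classical Lenglart--Lépingle inequality (see, e.g., \cite[Kal}, the Lenglart domination lemma) gives, for every $p \in (0,1]$ (and in fact all $p \in (0,\infty)$), $\big\| Y_\infty \big\|_{L^p(\Omega)} \eqsim_{p} \big\| Z_\infty \big\|_{L^p(\Omega)}$, with constants depending only on $p$; tracking the $X$-dependence through Step~2 yields $\eqsim_{p,X}$. Since $Y_\infty = \sup_{t \ge 0}\|M(t,\cdot)\|_X$ and $Z_\infty = \|[N]_\infty^{1/2}\|_X$, this is precisely \eqref{eq:normestNM0<p<infty}. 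For $p \in (1,\infty)$ the statement is already contained in Theorem~\ref{thm:main}, so nothing new is needed there; the corollary's content is the extension to $p \le 1$, and the continuity of $M$ is used exactly to guarantee (via Lemma~\ref{lem:XUMDdecom} with $M^d = 0$, or directly) that $N$ may be taken continuous, which in turn makes $Y$ and $Z$ continuous so that Lenglart's lemma applies cleanly without worrying about jumps of the dominating process.
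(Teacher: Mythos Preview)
Your approach --- Lenglart extrapolation from the $L^q$-equivalence of Theorem~\ref{thm:main} applied to stopped martingales --- is exactly the paper's. However, Step~2 is muddled and contains a real confusion about what Lenglart's inequality requires. Once you have $\E Y_\tau^q \eqsim_{q,X} \E Z_\tau^q$ for every bounded stopping time $\tau$ (which is just \eqref{eq:BDG2} for $M^\tau$, using $[N^\tau]_\infty=[N]_\tau$), you are done: this \emph{is} Lenglart domination between the continuous increasing processes $Y^q$ and $Z^q$, and \cite[Proposition~IV.4.7]{RY} (or \cite{Lenglart}) then gives $\E Y_\infty^p \eqsim_{p,X} \E Z_\infty^p$ for every $p\in(0,q)$. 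There is no need to manufacture an $L^1$-domination $\E Y_\tau \lesssim \E Z_\tau$, and your proposed route to it via pointwise-in-$s$ scalar BDG plus Fubini does not work, since the $X$-norm and the expectation do not commute in the required direction.

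Two smaller points. Your $Y_t$ in Step~1 is $\bigl\|\sup_{u\le t}|N(u)|\bigr\|_X$, but in Step~3 you identify $Y_\infty$ with $\sup_t\|M(t)\|_X$; these are different quantities (the former dominates the latter), and only the latter matches the statement of the corollary. Take $Y_u=\sup_{t\le u}\|M(t)\|_X$ throughout, as the paper does; the required $L^q$-equivalence at stopping times is then precisely the second equivalence in \eqref{eq:BDG2}. Finally, to invoke Theorems~\ref{thm:DoobLp} and~\ref{thm:main} you need $M$ to be an honest $L^q$-martingale, not merely a continuous local one; the paper handles this by an initial stopping-time reduction to the case where $\|M\|_X$ is uniformly bounded, which you should include.
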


\begin{proof}
By a stopping time argument we can reduce to the case where $\|M(t,\omega)\|_X$ is uniformly bounded in $t\in\mathbb R_+$ and $\omega\in \Omega$ and $M$ becomes constant after a fixed time $T$. Now the existence of $N$ follows from Theorem \ref{thm:main} and it remains to prove \eqref{eq:normestNM0<p<infty} for $p\in (0,1]$. For this we can use a classical argument due to Lenglart. Indeed, for both estimates we can apply
\cite{Lenglart} or \cite[Proposition IV.4.7]{RY} to the continuous increasing processes $Y,Z:\R_+\times\Omega\to \mathbb R_+$ given by
\begin{align*}
Y_u &=\mathbb E \sup_{t\in [0,u]} \|M(t,\cdot)\|_X,
\\  Z_u &= \|s\mapsto [N(\cdot, \cdot, s)]_{u}^{1/2}\|_X,
\end{align*}
where $q\in (1, \infty)$ is a fixed number. Then by \eqref{eq:BDG2} for any bounded stopping time $\tau$, we have
\begin{align*}
\E Y_{\tau}^q & = \sup_{t\geq 0} \|M(t\wedge \tau,\cdot)\|_X^q \eqsim_{q,X} \E \|s\mapsto [N(\cdot\wedge \tau, \cdot, s)]_{\infty}^{1/2}\|_X^q \\ & \stackrel{(*)}{=} \E \|s\mapsto [N(\cdot, \cdot, s)]_{\tau}^{1/2}\|_X^q = \E Z_\tau^q,
\end{align*}
where we used \cite[Theorem 17.5]{Kal} in $(*)$. Now \eqref{eq:normestNM0<p<infty} for $p\in (0,q)$ follows from \cite{Lenglart} or \cite[Proposition IV.4.7]{RY}.
\end{proof}

As we saw in Theorem \ref{thm:DoobLp}, continuity of $M$ implies
pointwise continuity of the corresponding martingale field $N$.
The following corollaries of Theorem \ref{thm:main}  are devoted
to proving the same type of assertions concerning pure
discontinuity, quasi-left continuity, and having accessible jumps.

Let $\tau$ be a stopping time. Then $\tau$ is called {\em predictable} if there exists a sequence of stopping times $(\tau_n)_{n\geq 1}$ such that $\tau_n<\tau$ a.s.\ on $\{\tau>0\}$ for each $n\geq 1$ and $\tau_n \nearrow\tau$ a.s. A c\`adl\`ag process $V:\mathbb R_+ \times \Omega \to X$ is called to have {\em accessible jumps} if there exists a sequence of predictable stopping times $(\tau_n)_{n\geq 1}$ such that $\{t\in \mathbb R_+:\Delta V \neq 0\} \subset \{\tau_1,\ldots,\tau_n,\ldots\}$ a.s.

\begin{corollary}\label{cor:pdmartwiajissuchinS}
 Let $X$ be a UMD function space over a measure space $(S, \Sigma, \mu)$, $1<p<\infty$, $M:\mathbb R_+ \times \Omega \to X$ be a purely discontinuous $L^p$-martingale with accessible jumps. Let $N$ be the corresponding martingale field. Then $N(\cdot, s)$ is a purely discontinuous martingale with accessible jumps for a.e.\ $s\in S$.
\end{corollary}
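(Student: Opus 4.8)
The plan is to realise $N$ as a limit, in $\MQ^p(X)$, of martingale fields whose jump structure is explicitly under control, and then transfer pure discontinuity and accessibility of jumps to the limit for $\mu$-a.e.\ $s$. First I would reduce, by localisation, to the case where $M$ becomes constant after a fixed time $T$ and $M(T)\in L^p(\Omega;X)$: prove the statement for each $M^{\cdot\wedge T}$ together with the predictable family $(\tau_m\wedge T)_{m\ge1}$, then let $T\to\infty$, using that $N^{\cdot\wedge T}(\cdot,s)$ is the field of $M^{\cdot\wedge T}$ and that ``purely discontinuous'' and ``all jump times lie among the $\tau_m$'' are stable under this last passage to the limit. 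One may moreover assume the $\tau_m$ have pairwise disjoint graphs and take values in $[0,T]\cup\{\infty\}$. Throughout I will use that, by Theorem~\ref{thm:main}, $N(\cdot,s)$ is a c\`adl\`ag martingale for a.e.\ $s$, and that Remark~\ref{rem:DeltaMtau(s)=DeltaN(s)tau} identifies $\Delta M_\tau(s)=\Delta N(\cdot,s)_\tau$ a.e.\ $s$, a.s., for every stopping time $\tau$.

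Next, since $M$ is purely discontinuous with accessible jumps, its canonical decomposition into single-jump martingales reads $M_t=\sum_{m\ge1}\mathbf 1_{[\tau_m,\infty)}(t)\,\Delta M_{\tau_m}$ with convergence in $L^p(\Omega;X)$; each summand is an $X$-valued martingale because $\E(\Delta M_{\tau_m}\mid\F_{\tau_m-})=0$ by predictability of $\tau_m$. Put $Y^m(t,\omega,s):=\mathbf 1_{[\tau_m,\infty)}(t)\,\Delta M_{\tau_m}(\omega)(s)$, a martingale field, so that $\sum_{m\le k}Y^m$ is the martingale field attached to the $L^p$-martingale $\sum_{m\le k}\mathbf 1_{[\tau_m,\infty)}\Delta M_{\tau_m}$. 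Since these partial sums converge to $M(T)$ in $L^p(\Omega;X)$ at time $T$, Theorem~\ref{thm:main} (which, applied to differences, identifies $\|[N_a-N_b]_\infty^{1/2}\|_{L^p(\Omega;X)}$ with $\|M_a(T)-M_b(T)\|_{L^p(\Omega;X)}$ up to $p,X$-constants) yields $\sum_{m\le k}Y^m\to N$ in $\MQ^p(X)$. By Proposition~\ref{prop:completeMQ} I may pass to a subsequence $(k_i)$ such that, for $\mu$-a.e.\ $s$, $V_i(\cdot,s):=\sum_{m\le k_i}Y^m(\cdot,s)\to N(\cdot,s)$ in $L^1(\Omega;\mathcal D_b([0,\infty)))$. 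Fix such an $s$. Each $V_i(\cdot,s)$ is a finite sum of scalar martingales, every one of which is purely discontinuous with its only jump at the predictable time $\tau_m$; hence $V_i(\cdot,s)$ is purely discontinuous and its only possible jump times are $\tau_1,\tau_2,\dots$.

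Now I would transfer these two properties to $N(\cdot,s)$. For pure discontinuity: for every bounded continuous martingale $B$ with $B_0=0$, every $0\le u\le t$ and $A\in\F_u$, one has $\E[(V_i(t,s)-V_i(u,s))(B_t-B_u)\mathbf 1_A]=0$; letting $i\to\infty$, using $V_i(t,s)\to N(t,s)$ in $L^1(\Omega)$ and boundedness of $B_t-B_u$, the same identity holds for $N(\cdot,s)$, and $N(0,s)=0$, so $N(\cdot,s)$ is orthogonal to every continuous local martingale, i.e.\ purely discontinuous. For accessible jumps: the set $G:=\{(t,\omega):\Delta N(t,\omega,s)\neq0,\ t\notin\{\tau_m(\omega):m\ge1\}\}$ is optional; if $G$ were not evanescent, the optional section theorem (see \cite{JS}) would give a stopping time $\rho$ whose graph lies in $G$ and with $\P(\rho<\infty)>0$, so $\rho\neq\tau_m$ a.s.\ for every $m$ and $\Delta N(\cdot,s)_\rho\neq0$ a.s.\ on $\{\rho<\infty\}$; but $\Delta (V_i(\cdot,s))_\rho=0$ a.s.\ for every $i$, while $|\Delta(V_i(\cdot,s))_\rho-\Delta N(\cdot,s)_\rho|\le2\|V_i(\cdot,s)-N(\cdot,s)\|_{\mathcal D_b([0,\infty))}\to0$ in $L^1(\Omega)$, forcing $\Delta N(\cdot,s)_\rho=0$ a.s., a contradiction. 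Hence $G$ is evanescent, $N(\cdot,s)$ has accessible jumps (with the family $(\tau_m)$), and undoing the reductions finishes the proof.

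I expect the main obstacle to be the step producing the decomposition $M_t=\sum_m\mathbf 1_{[\tau_m,\infty)}(t)\Delta M_{\tau_m}$ with genuine $L^p(\Omega;X)$-convergence and matching it with the field construction: the convergence of the partial sums rests on the lattice-valued square-function inequality \eqref{eq:BurkholderX} applied to the pairwise orthogonal, predictably timed jump martingales (hence on the UMD property of $X$), and one must check that the martingale field of a partial sum is literally $\sum_{m\le k}Y^m$ and that Theorem~\ref{thm:main} applies to each of them; everything else — the reductions, the $L^1(\Omega;\mathcal D_b)$-limit arguments, and the use of the optional section theorem for a single fixed good $s$ — is routine. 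An alternative that avoids the explicit jump decomposition is to approximate $M(T)$ in $L^p(\Omega;X)$ by simple functions, apply the $\mathrm{Id}_X$-compatible Meyer--Yoeurp and accessible-jump projections (Lemma~\ref{lem:XUMDdecom} and \cite{Yarodecom}) to the corresponding conditional expectations so as to obtain finite-dimensional-valued purely discontinuous martingales with accessible jumps converging to $M$, and then run the same limit arguments, using the characterisation of accessible jumps through the vanishing of $\Delta(\cdot)_\sigma$ at totally inaccessible stopping times $\sigma$.
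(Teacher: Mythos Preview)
Your proposal is correct and starts from the same approximation as the paper: define $M^k_t=\sum_{n\le k}\Delta M_{\tau_n}\mathbf 1_{[0,t]}(\tau_n)$ and pass to the limit. The difference lies in how the limit is executed. The paper works entirely through quadratic variations: it shows that $[N^k(\cdot,s)]$ increases to a limit $F(\cdot,s)$, uses Theorem~\ref{thm:main} together with the weak differential subordination bound from \cite{Yarodecom} to get $\|F_\infty^{1/2}\|_{L^p(\Omega;X)}<\infty$, and then identifies $F=[N]$ by proving that $(M^k(T))_k$ is Cauchy in $L^p(\Omega;X)$ with limit $M(T)$ (matching jumps of the purely discontinuous limit with those of $M$). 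Since $F$ is visibly a pure jump process with jumps contained in $\{\tau_n\}$, this yields both conclusions at once. You instead invoke the last clause of Proposition~\ref{prop:completeMQ} to obtain, for $\mu$-a.e.\ $s$, convergence $V_i(\cdot,s)\to N(\cdot,s)$ in $L^1(\Omega;\mathcal D_b)$, and then run two separate closure arguments (orthogonality to bounded continuous martingales; optional section) at the level of a single scalar martingale.

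Both routes are legitimate. Yours is a bit more modular and avoids the $F=[N]$ identification, at the cost of needing the optional section theorem and the orthogonality characterisation of pure discontinuity. Note that the $L^p(\Omega;X)$-convergence $M^k(T)\to M(T)$ that you take as input is in the paper a \emph{conclusion} (recorded in the remark following the proof); the cleanest way to justify it directly is exactly the computation you hint at: $[N-N^k]_T(s)=\sum_{n>k}|\Delta M_{\tau_n}(s)|^2\mathbf 1_{[0,T]}(\tau_n)\downarrow 0$ is dominated by $[N]_T(s)$, and then Theorem~\ref{thm:main} plus dominated convergence in $L^p(\Omega;X)$ gives both the $\MQ^p$ and the $L^p$ convergence simultaneously. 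With that point made explicit, your argument is complete.
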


For the proof we will need the following lemma taken from \cite[Subsection 5.3]{DY17}.

\begin{lemma}\label{lemma:DeltaMisamart}
 Let $X$ be a Banach space, $1\leq p<\infty$, $M:\mathbb R_+ \times \Omega \to X$ be an $L^p$-martingale, $\tau$ be a~predictable stopping time. Then $(\Delta M_{\tau}\mathbf 1_{[0,t]}(\tau))_{t\geq 0}$ is an $L^p$-martingale as well.
\end{lemma}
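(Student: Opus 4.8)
The statement to prove is Lemma~\ref{lemma:DeltaMisamart}: for an $L^p$-martingale $M$ with values in a Banach space $X$ and a predictable stopping time $\tau$, the process $(\Delta M_\tau \one_{[0,t]}(\tau))_{t\geq 0}$ is an $L^p$-martingale.

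\bigskip

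The plan is to reduce everything to the jump of $M$ at the single predictable time $\tau$, and to exploit the defining property of predictability, namely that $\tau$ is announced by a sequence $\tau_n \nearrow \tau$ with $\tau_n < \tau$ on $\{\tau > 0\}$. First I would record that $\Delta M_\tau = M_\tau - M_{\tau-}$ is well-defined and lies in $L^p(\Omega;X)$: since $M$ is c\`adl\`ag and $\tau_n \uparrow \tau$, optional stopping gives $M_{\tau_n} = \E(M_\tau \mid \F_{\tau_n})$ (after a localization/truncation to make $M_\tau$ integrable, e.g. replacing $M$ by $M^{T}$ for fixed $T$ and noting the general case follows since on $\{\tau > t\}$ the process in question vanishes up to time $t$), and one shows $M_{\tau_n} \to M_{\tau-}$ both a.s.\ (by left-continuity of paths from the left) and in $L^p$ (by the martingale convergence theorem applied to the closed martingale $(M_{\tau_n})_n$ with respect to the filtration $(\F_{\tau_n})_n$, whose limit $\sigma$-algebra is $\F_{\tau-}$). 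Hence $M_{\tau-} = \E(M_\tau \mid \F_{\tau-})$ and $\Delta M_\tau \in L^p(\Omega;X)$ with $\E(\Delta M_\tau \mid \F_{\tau-}) = 0$.

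\bigskip

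Next I would check the measurability and adaptedness. Write $D_t := \Delta M_\tau \one_{[0,t]}(\tau) = \Delta M_\tau \one_{\{\tau \leq t\}}$. The variable $\Delta M_\tau$ is $\F_\tau$-measurable, and since $\tau$ is predictable it is in fact $\F_{\tau-}$-measurable; together with $\{\tau \le t\} \in \F_t$ this gives that $D_t$ is $\F_t$-measurable (one uses that on $\{\tau \le t\}$, $\F_{\tau-}$ restricted there is contained in $\F_t$). Integrability of $D_t$ is immediate from $\|D_t\|_X \le \|\Delta M_\tau\|_X \in L^p$. For the martingale property, fix $s < t$; I would compute $\E(D_t \mid \F_s) = \E(\Delta M_\tau \one_{\{\tau \le t\}} \mid \F_s)$ and split according to whether $\tau \le s$ or $\tau > s$. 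On $\{\tau \le s\}$ (which is $\F_s$-measurable) the conditional expectation is just $\Delta M_\tau \one_{\{\tau \le s\}} = D_s$. On $\{\tau > s\}$ one needs $\E(\Delta M_\tau \one_{\{s < \tau \le t\}} \mid \F_s) = 0$; this follows by conditioning first on $\F_{\tau-}$: using $\E(\Delta M_\tau \mid \F_{\tau-}) = 0$ together with the fact that $\{s < \tau \le t\} \in \F_{\tau-}$ (the event $\{\tau > s\}$ and $\{\tau \le t\}$ both belong to $\F_{\tau-}$ for a predictable $\tau$, since these are left-limits of events in $\F_{\tau_n}$), the tower property gives $\E(\Delta M_\tau \one_{\{s<\tau\le t\}} \mid \F_s) = \E(\one_{\{s<\tau\le t\}} \E(\Delta M_\tau \mid \F_{\tau-}) \mid \F_s) = 0$. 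Summing the two contributions yields $\E(D_t \mid \F_s) = D_s$, as desired.

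\bigskip

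The main obstacle I anticipate is the careful handling of the $\sigma$-algebra $\F_{\tau-}$ for a predictable stopping time: specifically, establishing that $\Delta M_\tau$ is $\F_{\tau-}$-measurable with vanishing conditional expectation given $\F_{\tau-}$, and that events like $\{s < \tau \le t\}$ lie in $\F_{\tau-}$. These are standard facts in the scalar theory of predictable times (e.g.\ via the announcing sequence and the identity $\F_{\tau-} = \bigvee_n \F_{\tau_n}$), but one must be slightly attentive to the Banach-space-valued setting, where the needed input is just the $L^p$-martingale convergence theorem for $X$-valued martingales along the filtration $(\F_{\tau_n})_n$, valid without any geometric assumption on $X$. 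A secondary technical point, already flagged above, is the preliminary truncation $M \rightsquigarrow M^T$ to guarantee integrability of $M_\tau$; since the claimed process is unaffected on $[0,T]$ and $T$ is arbitrary, this causes no loss of generality.
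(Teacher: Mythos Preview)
The paper does not prove this lemma; it simply cites \cite[Subsection~5.3]{DY17}. Your outline is sound and supplies a direct argument, but two points need correction.

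First, the claim that $\Delta M_\tau$ is $\F_{\tau-}$-measurable is false: combined with your (correct) assertion $\E(\Delta M_\tau \mid \F_{\tau-}) = 0$ it would force $\Delta M_\tau = 0$ a.s. What is true is that $M_{\tau-}$ is $\F_{\tau-}$-measurable while $M_\tau$ is only $\F_\tau$-measurable; hence $\Delta M_\tau$ is $\F_\tau$-measurable, and this already suffices for adaptedness of $D_t$ since $B \cap \{\tau \le t\} \in \F_t$ for every $B \in \F_\tau$.

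Second, the tower-property step as written requires $\F_s \subset \F_{\tau-}$, which fails in general (take $\tau$ deterministic with $\tau < s$). The repair is to argue directly on test sets: for $A \in \F_s$ one has $A \cap \{s < \tau\} \in \F_{\tau-}$ by definition of $\F_{\tau-}$, and $\{\tau \le t\} \in \F_{\tau-}$ since $\tau$ is $\F_{\tau-}$-measurable, so
\[
\E\bigl(\Delta M_\tau\,\one_{\{s<\tau\le t\}}\,\one_A\bigr)
= \E\bigl(\E(\Delta M_\tau \mid \F_{\tau-})\,\one_{\{s<\tau\le t\}\cap A}\bigr)
= 0,
\]
which gives the martingale property. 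A cleaner alternative is to write $D_t = M^\tau_t - M^{\tau-}_t$ with $M^{\tau-}_t := M_t\one_{\{t<\tau\}} + M_{\tau-}\one_{\{t\ge\tau\}}$ and observe that $M^{\tau-}$ is an $L^p$-martingale as the $L^p$-limit of the stopped martingales $M^{\tau_n}$ along an announcing sequence.
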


\begin{proof}[Proof of Corollary \ref{cor:pdmartwiajissuchinS}]
Without loss of generality we can assume that there exists $T\geq 0$ such that $M_t=M_T$ for all $t\geq T$, and that $M_0=0$.
 Since $M$ has accessible jumps, there exists a sequence of predictable stopping times $(\tau_n)_{n\geq 1}$ such that a.s.
 $$
 \{t\in \mathbb R_+:\Delta M \neq 0\} \subset \{\tau_1,\ldots,\tau_n,\ldots\}.
 $$
 For each $m\geq 1$ define a process $M^m:\mathbb R_+ \times \Omega \to X$ in the following way:
 \[
  M^m(t) := \sum_{n=1}^m\Delta M_{\tau_n}\mathbf 1_{[0,t]}(\tau_n),\;\;\; t\geq 0.
 \]
Note that $M^m$ is a purely discontinuous $L^p$-martingale with accessible jumps by Lemma \ref{lemma:DeltaMisamart}. Let $N^m$ be the corresponding martingale field. Then $N^m(\cdot, s)$ is a purely discontinuous martingale with accessible jumps for almost any $s\in S$ due to Remark \ref{rem:DeltaMtau(s)=DeltaN(s)tau}. Moreover, for any $m\geq \ell\geq 1$ and any $t\geq 0$ we have that a.s.\ $[N^{m}(\cdot, s)]_t \geq [N^{\ell}(\cdot, s)]_t$. Define $F:\mathbb R_+\times \Omega \times S \to \mathbb R_+\cup\{+\infty\}$ in the following way:
$$
F(t,\cdot,s):= \lim_{m\to \infty} [N^{m}(\cdot, s)]_t,\;\;\; s\in S, t\geq 0.
$$
Note that $F(\cdot, \cdot, s)$ is a.s.\ finite for almost any $s\in S$. Indeed, by Theorem \ref{thm:main} and \cite[Theorem 4.2]{Yarodecom} we have that for any $m\geq 1$
\[
\big\|[N^m]_{\infty}^{1/2}\big\|_{L^p(\Omega;X)} \eqsim_{p,X} \|M^m(T,\cdot) \|_{L^p(\Omega;X)} \leq \beta_{p,X}\|M(T,\cdot)\|_{L^p(\Omega;X)},
\]
so by Definition \ref{def:Bfs}$(iv)$, $F(\cdot, \cdot, s)$ is
a.s.\ finite for almost any $s\in S$ and
\begin{align*}
 \big\|F_{\infty}^{1/2}\big\|_{L^p(\Omega;X)} &= \big\|F_{T}^{1/2}\big\|_{L^p(\Omega;X)} = \lim_{m\to \infty}\big\|[N^m]_{T}^{1/2}\big\|_{L^p(\Omega;X)}\\
 &\lesssim_{p,X} \limsup_{m\to \infty}\|M^m(T,\cdot) \|_{L^p(\Omega;X)} \lesssim_{p,X} \|M(T,\cdot)\|_{L^p(\Omega;X)}.
\end{align*}
Moreover, for almost any $s\in S$ we have that $F(\cdot, \cdot, s)$ is pure jump and
\[
 \{t\in \mathbb R_+:\Delta F \neq 0\} \subset \{\tau_1,\ldots,\tau_n,\ldots\}.
\]
Therefore to this end it suffices to show that $F(s)=[N(s)]$ a.s.\
on $\Omega$ for a.e.\ $s\in S$. Note that by Definition
\ref{def:Bfs}$(iv)$,
\begin{equation}\label{eq:FisqvofM^0}
 \big\|(F-[N^m])^{1/2}(\infty)\big\|_{L^p(\Omega;X)}\to 0,\;\;\; m\to \infty
\end{equation}
so by Theorem \ref{thm:main} $(M^m(T))_{m\geq 1}$ is a Cauchy sequence in $L^p(\Omega; X)$. Let $\xi$ be its limit, $M^0:\mathbb R_+ \times \Omega \to X$ be a martingale such that $M^0(t)=\mathbb E (\xi|\mathcal F_{t})$ for all $t\geq 0$. Then by \cite[Proposition 2.14]{Yarodecom} $M^0$ is purely discontinuous. Moreover, for any stopping time $\tau$ a.s.\
\[
 \Delta M^0_{\tau} = \lim_{m\to \infty} \Delta M^m_{\tau} = \lim_{m\to \infty}\Delta M_{\tau} \mathbf 1_{\{\tau_1,\ldots,\tau_m\}}(\tau) = \Delta M_{\tau},
\]
where the latter holds since the set $\{\tau_1,\ldots,\tau_n,\ldots\}$ exhausts the jump times of $M$. Therefore $M=M^0$ since both $M$ and $M^0$ are purely discontinuous with the same jumps, and hence $[N]=F$ (where $F(s) = [M^0(s)]$ by \eqref{eq:FisqvofM^0}). Consequently $N(\cdot,\cdot, s)$ is purely discontinuous with accessible jumps for almost all $s\in S$.
\end{proof}

\begin{remark}
 Note that the proof of Corollary \ref{cor:pdmartwiajissuchinS} also implies that $M^m_t \to M_t$ in $L^p(\Omega; X)$ for each $t\geq 0$.
\end{remark}

A c\`adl\`ag process $V:\mathbb R_+ \times \Omega \to X$ is called {\em quasi-left continuous} if $\Delta V_{\tau}=0$ a.s.\ for any predictable stopping time $\tau$.

\begin{corollary}\label{cor:pdqlcmartissuchinS}
 Let $X$ be a UMD function space over a measure space $(S, \Sigma, \mu)$, $1<p<\infty$, $M:\mathbb R_+ \times \Omega \to X$ be a purely discontinuous quasi-left continuous \mbox{$L^p$-mar}\-tin\-gale. Let $N$ be the corresponding martingale field. Then $N(\cdot, s)$ is a purely discontinuous quasi-left continuous martingale for a.e.\ $s\in S$.
\end{corollary}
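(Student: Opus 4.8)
The plan is to mirror the structure of the proof of Corollary~\ref{cor:pdmartwiajissuchinS}, replacing the role of predictable jump times by totally inaccessible ones, and to transfer quasi-left continuity of $M$ to quasi-left continuity of $N(\cdot,s)$ via the norm equivalence of Theorem~\ref{thm:main} together with Remark~\ref{rem:DeltaMtau(s)=DeltaN(s)tau}. As in the previous corollary I first reduce, by a stopping argument, to the case where $M$ is an $L^p$-martingale that becomes constant after a fixed time $T$ and with $M_0=0$. Then $N$ is the martingale field of Theorem~\ref{thm:main}.

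First I would establish quasi-left continuity. Let $\tau$ be a predictable stopping time. Since $M$ is quasi-left continuous, $\Delta M_\tau=0$ a.s.\ in $X$, i.e.\ $\|\Delta M_\tau\|_{L^p(\Omega;X)}=0$. By Remark~\ref{rem:DeltaMtau(s)=DeltaN(s)tau} we have $\Delta M_\tau(s)=\Delta N(\cdot,s)_\tau$ for almost every $s\in S$, so $\|\Delta N_\tau(\cdot)\|_{L^p(\Omega;X)}=0$ as well; hence by Definition~\ref{def:Bfs}$(i)$, $\Delta N(\cdot,s)_\tau=0$ a.s.\ for a.e.\ $s\in S$. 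The subtlety is that ``a.e.\ $s$'' here depends on $\tau$, whereas quasi-left continuity of $N(\cdot,s)$ requires a single null set of $s$ working for \emph{all} predictable $\tau$ simultaneously. To handle this I would use the standard fact (applicable since $X$ is UMD, hence reflexive, and the filtration satisfies the usual conditions) that a c\`adl\`ag martingale is quasi-left continuous if and only if it does not jump at a fixed sequence of predictable times exhausting its jumps; more efficiently, I would instead prove pure discontinuity and quasi-left continuity together by invoking the $X$-valued Meyer--Yoeurp-type / accessible-part decomposition.

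Concretely, the cleanest route is: decompose the purely discontinuous martingale $M$ into its quasi-left continuous part and its part with accessible jumps (this decomposition exists and is preserved under $\otimes\mathrm{Id}_X$ by the scalar decomposition combined with \cite{Yarodecom}, analogously to Lemma~\ref{lem:XUMDdecom}); since $M$ equals its quasi-left continuous part, the accessible part is zero, so by Theorem~\ref{thm:main} the martingale field of the accessible part has zero $\MQ^p(X)$-norm and hence vanishes for a.e.\ $s$. Thus for a.e.\ $s\in S$, $N(\cdot,s)$ has no accessible-jump part; since $N(\cdot,s)$ is also purely discontinuous for a.e.\ $s$ — which follows exactly as in Corollary~\ref{cor:pdmartwiajissuchinS}, using \cite[Proposition 2.14]{Yarodecom} and the fact that purely discontinuous $L^p$-martingales form a closed subspace stable under the construction of $N$ — we conclude that $N(\cdot,s)$ is purely discontinuous and quasi-left continuous for a.e.\ $s\in S$.

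The main obstacle is exactly the uniformity-in-$s$ issue flagged above: passing from ``for each predictable $\tau$, a null set of $s$'' to ``a null set of $s$, for all predictable $\tau$.'' This is why I route the argument through the decomposition of $M$ into its quasi-left continuous and accessible parts at the level of $X$-valued martingales (so that a \emph{single} martingale field, namely that of the accessible part, is shown to vanish), rather than testing against predictable stopping times one at a time; the cost is checking that this decomposition tensorizes as $A\otimes\mathrm{Id}_X$, which follows from \cite{Yarodecom} in the same way Lemma~\ref{lem:XUMDdecom} does, and that the resulting operators are $L^p(\Omega;X)$-bounded so that Theorem~\ref{thm:main} can be applied to each summand.
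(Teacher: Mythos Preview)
Your decomposition route has a genuine circularity. You write: ``since $M$ equals its quasi-left continuous part, the accessible part is zero, so the martingale field of the accessible part \ldots\ vanishes for a.e.\ $s$. Thus for a.e.\ $s\in S$, $N(\cdot,s)$ has no accessible-jump part.'' But the first sentence is vacuous: if $M^a=0$ then of course its field is identically zero. The second sentence does \emph{not} follow, because ``the field of the accessible part of $M$'' is not the same object as ``the accessible part of the scalar martingale $N(\cdot,s)$.'' To identify them you would need to know that the $X$-valued decomposition $M=M^q+M^a$ induces, pointwise in $s$, the canonical decomposition of $N(\cdot,s)$---which is precisely the content of the corollary you are trying to prove (and is in fact how the \emph{subsequent} corollary in the paper is deduced from this one). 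So the argument assumes its conclusion. The same issue infects your claim that pure discontinuity of $N(\cdot,s)$ ``follows exactly as in Corollary~\ref{cor:pdmartwiajissuchinS}'': that proof works because one has an explicit finite-sum approximation $M^m=\sum_{n\le m}\Delta M_{\tau_n}\mathbf 1_{[\tau_n,\infty)}$ over \emph{predictable} jump times, whose fields are manifestly purely discontinuous with accessible jumps via Remark~\ref{rem:DeltaMtau(s)=DeltaN(s)tau}. For a quasi-left continuous $M$ no such finite predictable-time approximation exists; you need a different approximation.

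The paper resolves exactly the uniformity-in-$s$ obstacle you flagged, but by building an approximation rather than appealing to a decomposition. It uses the jump measure $\mu$ of $M$ and its compensator $\nu$: truncating jump sizes to $[1/n,n]$ and stopping at suitable $\tau_n$ gives purely discontinuous quasi-left continuous $L^p$-martingales $M^n=(\mathbf 1_{[0,\tau_n]}\mathbf 1_{B_n})\star\bar\mu$ with only finitely many jumps on $[0,T]$, all at totally inaccessible times. Because each $M^n$ has only finitely many jump times, Remark~\ref{rem:DeltaMtau(s)=DeltaN(s)tau} applied to this \emph{finite} list shows that the field $N^n(\cdot,s)$ is purely discontinuous and quasi-left continuous for a.e.\ $s$, with the null set now independent of any further choice of stopping time. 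One then runs the monotone-quadratic-variation limit argument of Corollary~\ref{cor:pdmartwiajissuchinS} verbatim to pass from $N^n$ to $N$. The essential ingredient you are missing is this random-measure construction producing approximants with controlled (finitely many, totally inaccessible) jumps; without it the uniformity problem you correctly identified cannot be sidestepped.
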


The proof will exploit the random measure theory. Let $(J, \mathcal J)$ be a measurable space. Then a family $\mu = \{\mu(\omega; \ud t, \ud x), \omega \in \Omega\}$ of nonnegative measures on $(\mathbb R_+ \times J; \mathcal B(\mathbb R_+)\otimes \mathcal J)$ is called a {\it random measure}. A random measure $\mu$ is called {\it integer-valued} if it takes values in $\mathbb N\cup\{\infty\}$, i.e.\ for each $A \in \mathcal B(\mathbb R_+)\otimes \mathcal F\otimes \mathcal J$ one has that $\mu(A) \in \mathbb N\cup\{\infty\}$ a.s., and if $\mu(\{t\}\times J)\in \{0,1\}$ a.s.\ for all $t\geq 0$.

Let $X$ be a Banach space, $\mu$ be a random measure, $F:\mathbb R_+ \times \Omega \times J  \to X$ be such that $\int_{\mathbb R_+ \times J} \|F\| \ud \mu<\infty$ a.s. Then the integral process $((F\star \mu)_t)_{t\geq 0}$ of the form
\[
 (F\star \mu)_t := \int_{\mathbb R_+ \times J} F(s,\cdot, x)\mathbf 1_{[0,t]}(s)\mu(\cdot; \ud s, \ud x),\;\;\; t\geq 0,
\]
is a.s.\ well-defined.

Any integer-valued optional ${\mathcal P}\otimes \mathcal J$-$\sigma$-finite random measure $\mu$ has a {\em compensator}: a~unique predictable ${\mathcal P}\otimes \mathcal J$-$\sigma$-finite random measure $\nu$ such that $\mathbb E (W \star \mu)_{\infty} = \mathbb E (W \star \nu)_{\infty}$ for each ${\mathcal P}\otimes \mathcal J$-measurable real-valued nonnegative $W$ (see \cite[Theorem II.1.8]{JS}). For any optional ${\mathcal P}\otimes \mathcal J$-$\sigma$-finite measure $\mu$ we define the associated compensated random measure by $\bar{\mu} = \mu -\nu$.

Recall that $\mathcal P$ denotes the predictable $\sigma$-algebra on $\mathbb R_+ \times \Omega$ (see \cite{Kal} for details). For each $\mathcal P \otimes \mathcal J$-strongly-measurable $F:\mathbb R_+ \times \Omega \times J \to X$ such that \linebreak $\mathbb E (\|F\|\star \mu)_{\infty}< \infty$ (or, equivalently, $\mathbb E (\|F\|\star \nu)_{\infty}<\infty$, see the definition of a compensator above) we can define a process $F\star \bar{\mu}$ by $F \star \mu - F \star \nu$. Then this process is a purely discontinuous local martingale. We will omit here some technicalities for the convenience of the reader and refer the reader to \cite[Chapter II.1]{JS}, \cite[Subsection 5.4-5.5]{DY17}, and \cite{KalRM,Nov75,MarRo} for more details on random measures.

\begin{proof}[Proof of Corollary \ref{cor:pdqlcmartissuchinS}]
Without loss of generality we can assume that there exists $T\geq 0$ such that $M_t=M_T$ for all $t\geq T$, and that $M_0=0$.
Let $\mu$ be a random measure defined on $\mathbb R_+\times X$ in the following way
\[
 \mu(A\times B) = \sum_{t\geq 0} \mathbf 1_{A}(t) \mathbf 1_{B\setminus \{0\}}(\Delta M_t),
\]
where $A\subset \mathbb R_+$ is a Borel set, and $B\subset X$ is a ball. For each $k, \ell\geq 1$ we define a stopping time $\tau_{k,\ell}$ as follows
\[
 \tau_{k,\ell} = \inf\{t\in \mathbb R_+: \#\{u\in [0,t] : \|\Delta M_u\|_X\in [1/k, k]\} = \ell\}.
\]
Since $M$ has c\`adl\`ag trajectories, $\tau_{k,\ell}$ is a.s.\ well-defined and takes its values in $[0,\infty]$. Moreover, $\tau_{k,\ell}\to \infty$ for each $k\geq 1$ a.s.\ as $\ell\to \infty$, so we can find a subsequence $(\tau_{k_n,\ell_n})_{n\geq 1}$ such that $k_n\geq n$ for each $n\geq 1$ and $\inf_{m\geq n} \tau_{k_m, \ell_m}\to \infty$ a.s.\ as $n\to \infty$. Define $\tau_n = \inf_{m\geq n} \tau_{k_m, \ell_m}$ and define $M^n := (\mathbf 1_{[0,\tau_n]} \mathbf 1_{B_n})\star \bar{\mu}$, where $\bar{\mu} = \mu-\nu$ is such that $\nu$ is a compensator of $\mu$ and $B_n = \{x\in X:\|x\|\in [1/n,n]\}$. Then $M^n$ is a purely discontinuous quasi-left continuous martingale by \cite{DY17}. Moreover, a.s.\
\[
 \Delta M^n_t = \Delta M_t \mathbf 1_{[0,\tau_n]}(t) \mathbf 1_{[1/n,n]}(\|\Delta M_t\|),\;\;\;\; t\geq 0.
\]
so by \cite{Yarodecom} $M^n$ is an $L^p$-martingale (due to the {\em weak differential subordination} of purely discontinuous martingales).

 The rest of the proof is analogous to the proof of Corollary \ref{cor:pdmartwiajissuchinS} and uses the fact that $\tau_n\to \infty$ monotonically a.s.
\end{proof}

Let $X$ be a Banach space. A local martingale $M:\mathbb R_+ \times \Omega \to X$ is called to have the {\em canonical decomposition} if there exist local martingales $M^c,M^q, M^a:\mathbb R_+\times \Omega \to X$ such that $M^c$ is continuous, $M^q$ and $M^a$ are purely discontinuous, $M^q$ is quasi-left continuous, $M^a$ has accessible jumps, $M^c_0=M^q_0=0$, and $M = M^c +  M^q + M^a$. Existence of such a decomposition was first shown in the real-valued case by Yoeurp in \cite{Yoe76}, and recently such an existence was obtained in the UMD space case (see \cite{Yarodecom,Yar17GMY}).

\begin{remark}\label{rem:candecisunique}
 Note that if a local martingale $M$ has some canonical decomposition, then this decomposition is unique (see \cite{Kal,Yoe76,Yarodecom,Yar17GMY}).
\end{remark}

\begin{corollary}
 Let $X$ be a UMD Banach function space, $1<p<\infty$, $M:\mathbb R_+\times \Omega \to X$ be an $L^p$-martingale. Let $N$ be the corresponding martingale field. Let $M=M^c + M^q + M^a$ be the canonical decomposition, $N^c$, $N^q$, and $N^a$ be the corresponding martingale fields. Then $N(s)=N^c(s) + N^q(s) + N^a(s)$ is the canonical decomposition of $N(s)$ for a.e.\ $s\in S$. In particular, if $M_0=0$ a.s., then $M$ is continuous, purely discontinuous quasi-left continuous, or purely discontinuous with accessible jumps if and only if $N(s)$ is so for a.e.\ $s\in S$.
\end{corollary}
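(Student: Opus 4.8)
The plan is to assemble the statement from the three corollaries just proved (Corollaries~\ref{cor:pdmartwiajissuchinS} and~\ref{cor:pdqlcmartissuchinS} together with Theorem~\ref{thm:DoobLp} for the continuous part), using the uniqueness of the canonical decomposition in Remark~\ref{rem:candecisunique} to glue the pieces. First I would record that, by the linearity of the map $M\mapsto N$ from the martingale to its martingale field (which follows from the construction in Theorem~\ref{thm:DoobLp}, since $N$ is obtained as an $L^p$-limit of the fields associated to simple approximants), applying this map to $M=M^c+M^q+M^a$ yields $N=N^c+N^q+N^a$ as an identity of martingale fields, where $N^c$, $N^q$, $N^a$ are the fields associated to $M^c$, $M^q$, $M^a$. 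Since this is an equality in $L^p(\Omega;X(\mathcal D_b))$ (or rather $\MQ^p(X)$), it holds pointwise: there is a set $S_0\subseteq S$ of full measure such that for every $s\in S_0$ one has $N(\cdot,s)=N^c(\cdot,s)+N^q(\cdot,s)+N^a(\cdot,s)$ as an identity of real-valued processes.

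Next I would invoke the three structural corollaries. By Theorem~\ref{thm:DoobLp} (the ``moreover'' clause), since $M^c$ is continuous, the field $N^c(\cdot,s)$ is continuous for a.e.\ $s\in S$. By Corollary~\ref{cor:pdqlcmartissuchinS}, since $M^q$ is purely discontinuous and quasi-left continuous, $N^q(\cdot,s)$ is purely discontinuous and quasi-left continuous for a.e.\ $s$. By Corollary~\ref{cor:pdmartwiajissuchinS}, since $M^a$ is purely discontinuous with accessible jumps, $N^a(\cdot,s)$ is purely discontinuous with accessible jumps for a.e.\ $s$. Intersecting these three full-measure sets with $S_0$, we obtain a full-measure set of $s$ for which $N(\cdot,s)$ is written as the sum of a continuous martingale, a purely discontinuous quasi-left continuous martingale, and a purely discontinuous martingale with accessible jumps, with the continuous and quasi-left continuous parts vanishing at $0$ (which is inherited pointwise from $M^c_0=M^q_0=0$). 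This is precisely the definition of a canonical decomposition of $N(\cdot,s)$, and by the scalar-valued uniqueness statement (Remark~\ref{rem:candecisunique}, applied with $X=\R$) it \emph{is} the canonical decomposition.

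For the final ``in particular'' clause, suppose $M_0=0$ and, say, $M$ is purely discontinuous quasi-left continuous; then its canonical decomposition is $M=M^q$, i.e.\ $M^c=M^a=0$, hence the associated fields $N^c$ and $N^a$ are zero (the zero martingale maps to the zero field), so $N(\cdot,s)=N^q(\cdot,s)$ is purely discontinuous quasi-left continuous for a.e.\ $s$ by Corollary~\ref{cor:pdqlcmartissuchinS}. The converse direction uses uniqueness in the other direction: if $N(\cdot,s)$ is purely discontinuous quasi-left continuous for a.e.\ $s$, then by uniqueness of the (pointwise) canonical decomposition just established, $N^c(\cdot,s)=N^a(\cdot,s)=0$ for a.e.\ $s$, which forces $\|M^c(T,\cdot)\|_{L^p(\Omega;X)}=0$ and $\|M^a(T,\cdot)\|_{L^p(\Omega;X)}=0$ via the norm equivalence of Theorem~\ref{thm:main} (applied to each summand, using that $M\mapsto N$ respects the canonical decomposition), whence $M^c=M^a=0$ and $M=M^q$. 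The same argument handles the continuous and accessible-jump cases. The main obstacle, such as it is, is purely bookkeeping: ensuring the ``a.e.\ $s$'' exceptional sets from the three corollaries and from the pointwise identity $N=N^c+N^q+N^a$ are all handled on a common full-measure set, and making sure the linearity of $M\mapsto N$ (and hence its compatibility with the canonical decomposition) is legitimately invoked — both of which are immediate from the $L^p$-limit construction in Theorem~\ref{thm:DoobLp} and Proposition~\ref{prop:completeMQ}.
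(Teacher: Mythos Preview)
Your proposal is correct and follows essentially the same approach as the paper's proof: both assemble the first part from Theorem~\ref{thm:DoobLp}, Corollary~\ref{cor:pdmartwiajissuchinS}, and Corollary~\ref{cor:pdqlcmartissuchinS} together with uniqueness (Remark~\ref{rem:candecisunique}), and both handle the converse in the ``in particular'' clause by applying the pointwise uniqueness of the canonical decomposition to force the other summand fields to vanish, and then deducing that the corresponding $X$-valued summands of $M$ vanish. Your write-up is in fact slightly more explicit than the paper's (you spell out the linearity of $M\mapsto N$ and invoke Theorem~\ref{thm:main} for the passage from $N^c=N^a=0$ pointwise to $M^c=M^a=0$, whereas the paper just asserts this step).
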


\begin{proof}
 The first part follows from Theorem \ref{thm:DoobLp}, Corollary \ref{cor:pdmartwiajissuchinS}, and Corollary \ref{cor:pdqlcmartissuchinS} and the fact that $N(s)=N^c(s) + N^q(s) + N^a(s)$ is then a canonical decomposition of a local martingale $N(s)$ which is unique due to Remark \ref{rem:candecisunique}. Let us show the second part. One direction follows from Theorem \ref{thm:DoobLp}, Corollary \ref{cor:pdmartwiajissuchinS}, and Corollary \ref{cor:pdqlcmartissuchinS}. For the other direction assume that $N(s)$ is continuous for a.e.\ $s\in S$. Let $M= M^c + M^q + M^a$ be the canonical decomposition, $N^c$, $N^q$, and $N^a$ be the corresponding martingale fields of $M^c$, $M^q$, and $M^a$. Then by the first part of the theorem and the uniqueness of the canonical decomposition (see Remark \ref{rem:candecisunique}) we have that for a.e.\ $s\in S$, $N^q(s) = N^a(s) = 0$, so $M^q = M^a = 0$, and hence $M$ is continuous. The proof for the case of pointwise purely discontinuous quasi-left continuous $N$ or pointwise purely discontinuous $N$ with accessible jumps is similar.
\end{proof}

\begin{remark}
It remains open whether the first two-sided estimate in \eqref{eq:BDG2} can be extended to $p=1$. 
Recently, in \cite{Y18BDG} the second author has extended
the second two-sided estimate in \eqref{eq:BDG2} to arbitrary UMD Banach spaces and to $p\in
[1,\infty)$. Here the quadratic variation has to be replaced by a
generalized square function. 
\end{remark}

\bibliographystyle{plain}

\def\cprime{$'$}

\end{document}